\date{} 
 \DeclareMathOperator\supp{supp}
\DeclareMathOperator\inode{in}
\DeclareMathOperator\enode{end}
\theoremstyle{plain}
\newtheorem{theorem}{Theorem}[section]
\newtheorem{definition}[theorem]{Definition}
\newtheorem{proposition}[theorem]{Proposition}
\newtheorem{lemma}[theorem]{Lemma}
\newtheorem{corollary}[theorem]{Corollary}
\newtheorem{example}[theorem]{Example}
\newtheorem{remark}[theorem]{Remark}
\begin{document}

\parskip=8pt

\title[On the $\ell^1$ non-embedding in the James Tree Space]
{On the $\ell^1$ non-embedding in the James Tree Space}
\author[Ioakeim Ampatzoglou]{Ioakeim Ampatzoglou}
\address{Ioakeim Ampatzoglou,  
Department of Mathematics, The University of Texas at Austin.}
\email{ioakampa@math.utexas.edu}

\vspace*{-3cm}
\maketitle
\begin{abstract}
 James Tree Space ($\mathcal{JT}$), introduced by R. James in \cite{james tree},  is the first Banach space constructed having non-separable conjugate and not containing $\ell^1$. James actually proved that every infinite dimensional subspace of $\mathcal{JT}$ contains a Hilbert space, which implies the $\ell^1$ non-embedding. In this expository article, we present a direct proof of the $\ell^1$ non-embedding, using Rosenthal's $\ell^1$- Theorem \cite{rosenthal} and some measure theoretic arguments, namely Riesz's Representation Theorem \cite{rudin}.
\end{abstract}
\section{Introduction}
For many years, the conjecture that a separable Banach space with non-separable conjugate will contain $\ell^1$, up to embedding, was an open question in Banach space theory. It is well-known that $(\ell^1)^*$ coincides with $\ell^\infty$, which is non-separable, so a natural question is whether all separable Banach spaces with this property "look like" $\ell^1$, up to embedding. This conjecture was proved false by R. James \cite{james tree} who made a ingenius construction call the James Tree Space. The main idea relies on previous work of R. James \cite{james}, where a quasi-reflexive separable Banach space, isometric to its second conjugate and Hilbert-saturated, was constructed. By Hilbert-saturated we mean that each infinite dimensional subspace contains a Hilbert space, up to embedding. This space, called the James space, clearly has a separable conjugate though. R. James was able to preserve the Hilbert-saturation property but remove separability of the conjugate by creating a binary tree structure where intuitively each infinite branch of the tree will be basis for a James space. The $\ell^1$ non-embedding then follows as an immediate consequence of the Hilbert-saturation property. In this work, we provide a direct proof of the $\ell^1$ non-embedding, which was the main part of the conjecture, in a direct way i.e. without proving Hilbert-saturation property. For this purpose, we first review the Schauder bases theory, which is crucial for this construction and  introduce the James Tree Space. We then  apply Riesz's Representation Theorem \cite{rudin} to an appropriate $w^*$-compact subset of the conjugate space and Rosenthal's $\ell^1$-Theorem \cite{rosenthal} yields the claim.
\section{Preliminaries}
In this preliminary section, we summarize some major results from Schauder bases in Banach spaces which will be useful throughout this paper. The majority of the proofs can be found in most textbooks of Banach Space Theory, hence we provide proofs only for the results  which are not exactly stated in the bibliography as they are stated here. Our approach is based on \cite{linderstrauss}. 

\subsection{Notation} Let us clarify the notation used. Throughout this paper, all Banach spaces considered are assumed infinite dimensional unless stated. Given a Banach space $X$ we denote its unit ball by $B_X$ and its conjugate spaces by $X^*, X^{**}$, etc. Given a sequence $\{x_n\}_{n\in\mathbb{N}}$ in $X$ we denote $\langle x_n:n\in\mathbb{N}\rangle$ to be the vector space spanned by this sequence and $[x_n:n\in\mathbb{N}]$ to be the closure of the space spanned with respect to the norm. Finally. given a Banach space $X$ we denote $\wedge:X\to X^{**}$ the canonical embedding of $X$ in $X^{**}$ given by
$$\widehat{x}(x^*)=x^*(x),\quad\forall x\in X.$$
Recall that the canonical embedding is a linear isometry and a Banach space is called reflexive if the canonical embedding is surjective.
\subsection{Definition of Schauder basis}
As known a Banach space necessarily has uncountable algebraic basis. However, in most reasonable separable Banach spaces, we are able to find a countable topological basis i.e. each element can be expanded as a series with respect to the norm. More precisely we give the following definition:
\begin{definition}
Let X be a Banach space and  $\left\{e_n\right\}_{n\in\mathbb{N}}$ be
 a sequence of distinct elements of $X$ . The sequence 
$\left\{e_n\right\}_{n\in\mathbb{N}}$
is called a Schauder basis, or just a basis, of $X$ if for any $x\in{X}$ 
there is a unique sequence 
$\left\{\lambda_n\right\}_{n\in\mathbb{N}}\subseteq\mathbb{R}$
 such that
\begin{equation*}
x=\sum_{n=1}^\infty\lambda_ne_n .
\end{equation*}
\end{definition}
The existence of Schauder basis is easily seen to be possible only in separable Banach spaces. However, the converse is not true, as shown by P. Enflo in \cite{enflo}.
\begin{proposition}
Let $X$ be a Banach space with a Schauder basis. Then $X$ is separable.
\end{proposition}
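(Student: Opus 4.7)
The plan is to exhibit a countable dense subset of $X$ explicitly, built from the basis $\{e_n\}_{n\in\mathbb{N}}$. The natural candidate is
\[
D=\Bigl\{\sum_{n=1}^{N}q_n e_n : N\in\mathbb{N},\ q_1,\dots,q_N\in\mathbb{Q}\Bigr\},
\]
which is a countable union of countable sets, hence countable.

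To prove density, I would fix $x\in X$ and $\varepsilon>0$. By the definition of Schauder basis, there exists a unique scalar sequence $\{\lambda_n\}_{n\in\mathbb{N}}$ with $x=\sum_{n=1}^\infty \lambda_n e_n$, so I can choose $N$ large enough that $\bigl\|x-\sum_{n=1}^N \lambda_n e_n\bigr\|<\varepsilon/2$. Then I pick rationals $q_1,\dots,q_N$ approximating $\lambda_1,\dots,\lambda_N$ closely enough; since
\[
\Bigl\|\sum_{n=1}^N \lambda_n e_n-\sum_{n=1}^N q_n e_n\Bigr\|\le \sum_{n=1}^N |\lambda_n-q_n|\,\|e_n\|,
\]
and since the finitely many norms $\|e_n\|$ are fixed, choosing each $|\lambda_n-q_n|<\varepsilon/(2N\|e_n\|)$ (note $e_n\neq 0$ since the $e_n$ are distinct basis vectors and in particular $0$ can appear at most once; in fact none of them is zero, for otherwise uniqueness of the expansion would fail) makes this second error less than $\varepsilon/2$. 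The triangle inequality then gives $\|x-\sum_{n=1}^N q_n e_n\|<\varepsilon$, with $\sum_{n=1}^N q_n e_n\in D$.

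There is no real obstacle here: the statement is essentially a direct unpacking of the definition, relying only on (i) density of $\mathbb{Q}$ in $\mathbb{R}$, (ii) the triangle inequality, and (iii) the elementary observation that a countable union of finite-dimensional rational-coefficient combinations is countable. The only pedantic point worth recording is why each $e_n\neq 0$, which follows from uniqueness of the basis expansion; otherwise the argument is purely topological.
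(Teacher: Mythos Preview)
Your argument is correct and is the standard textbook proof. The paper itself does not supply a proof for this proposition (it states that proofs of such standard facts are omitted and refers the reader to \cite{linderstrauss}), so there is nothing to compare against; your construction of the countable set of finite rational combinations and the two-step $\varepsilon/2$ approximation is exactly what one finds in the references.
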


Consider $X$ to be a Banach space with  Schauder basis $\{e_n\}_{n\in\mathbb{N}}$. For any $n\in\mathbb{N}$ we define $e_n^{*}:X\rightarrow\mathbb{R}$
by:
\begin{equation*}
e_n^{*}(x)=e_n^{*}(\sum_{k=1}^\infty\lambda_ke_k)=\lambda_n.
\end{equation*}
Clearly $x=\displaystyle\sum_{n=1}^\infty e_n^{*}(x)e_n$.
One can easily prove the following:
\begin{proposition}\label{biothogonal boundness}
For any $n\in\mathbb{N}$,  $e_n^{*}\in X^*$ i.e. $e_n^*$ 
is a bounded linear functional.
\end{proposition}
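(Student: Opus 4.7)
The plan is the standard Banach--open--mapping argument: introduce an auxiliary norm on $X$ under which the partial sum projections are automatically bounded, show the new norm is complete, and then use openness of the identity map to compare the two norms.

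First I would define, for $x = \sum_{k=1}^{\infty}\lambda_k e_k \in X$, the quantity
$$|||x||| \;=\; \sup_{N \in \mathbb{N}} \left\| \sum_{k=1}^{N} \lambda_k e_k \right\|.$$
Since $\sum_{k=1}^N \lambda_k e_k \to x$ in $\|\cdot\|$, the supremum is finite, and the uniqueness of the expansion implies the usual norm axioms; in particular $\|x\| \leq |||x|||$. So the identity map $I : (X, |||\cdot|||) \to (X, \|\cdot\|)$ is well-defined, linear, bijective, and continuous with norm at most $1$.

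The key step is to show $(X, |||\cdot|||)$ is complete. Given a $|||\cdot|||$-Cauchy sequence $\{x^{(j)}\}$ with expansions $x^{(j)} = \sum_k \lambda_k^{(j)} e_k$, one checks that for each fixed $n$ the scalar sequence $\{\lambda_n^{(j)}\}_j$ is Cauchy (using that $\lambda_n^{(j)} e_n = S_n(x^{(j)}) - S_{n-1}(x^{(j)})$ where $S_n$ denotes the $n$-th partial sum), so $\lambda_n^{(j)} \to \lambda_n$ for some scalar $\lambda_n$. A uniform-Cauchy-type argument in $N$ shows that $x := \sum_k \lambda_k e_k$ converges in $\|\cdot\|$ and that $|||x^{(j)} - x||| \to 0$; uniqueness of the limit in $\|\cdot\|$ prevents any ambiguity. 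Once completeness is in hand, the inverse mapping theorem applied to $I$ yields a constant $K \geq 1$ with $|||x||| \leq K \|x\|$ for all $x \in X$.

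From this uniform bound one reads off
$$\left\| \sum_{k=1}^{n} \lambda_k e_k \right\| \;\leq\; K \, \|x\|, \qquad \forall\, n \in \mathbb{N},$$
and subtracting consecutive partial sums gives $|e_n^*(x)| \, \|e_n\| = \|S_n(x) - S_{n-1}(x)\| \leq 2K \, \|x\|$. Therefore $e_n^* \in X^*$ with $\|e_n^*\| \leq 2K / \|e_n\|$.

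The main obstacle is the completeness of $(X, |||\cdot|||)$: one must be careful to extract both the candidate limit $x$ and its basis coefficients consistently, since a priori the basis expansion is only known to exist for elements of $X$ already equipped with $\|\cdot\|$. Once that is settled, the open mapping theorem does all the remaining work essentially for free.
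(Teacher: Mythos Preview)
Your argument is correct and is precisely the classical proof: introduce the triple-bar norm making the partial-sum projections contractive, prove completeness of $(X,|||\cdot|||)$, invoke the open mapping theorem to get $|||x|||\le K\|x\|$, and then read off $\|e_n^*\|\le 2K/\|e_n\|$. The one point that deserves a bit more care in a fully written proof is the completeness step you flagged: given a $|||\cdot|||$-Cauchy sequence $\{x^{(j)}\}$, one first extracts the coordinate limits $\lambda_n$, then shows for each $\epsilon>0$ that the tails $\|\sum_{k=N}^{M}\lambda_k e_k\|$ are uniformly small (by comparing with the tails of a single $x^{(j_0)}$ chosen so that $|||x^{(j)}-x^{(j_0)}|||<\epsilon$ for $j\ge j_0$), so $\sum_k\lambda_k e_k$ converges in $\|\cdot\|$ to some $x\in X$, and finally passes to the limit in $N$ to get $|||x^{(j)}-x|||\to 0$. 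You have all of this in outline.

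As for comparison with the paper: the paper does not actually prove this proposition. It is one of the preliminary results the author explicitly omits, writing ``One can easily prove the following'' and referring the reader to the textbook literature (the cited Lindenstrauss--Tzafriri). Your proof is exactly the argument one finds there, so in that sense your approach coincides with the intended (but unstated) one.
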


The functionals $\left\{e_n^{*}\right\}_{n\in\mathbb{N}}$ are called biorthogonal functionals of the basis
 $\left\{e_n\right\}_{n\in\mathbb{N}}$. 
 
We now state an equivalent characterization of  Schauder bases. In fact, this is how one usually checks whether a given sequence in a Banach space is a Schauder basis.
\begin{proposition}\label{equiv schauder}
Let $X$ be a Banach space and a sequence $\left\{e_n\right\}_{n\in\mathbb{N}}$ of pairwise distinct, non-zero elements of $X$.
 The following statements are equivalent:
 \begin{enumerate}[(i)]
\item $\left\{e_n\right\}_{n\in\mathbb{N}}$ 
is a Schauder basis of  $X$
\item The following hold: \begin{itemize}
\item$X=[e_n:n\in\mathbb{N}].$ 
\item $\exists K>0$
 such that for any
$m>n\in\mathbb{N}$  and
$\lambda_1,...,\lambda_m\in\mathbb{R}$, there holds
\begin{equation}\label{constant}
||\sum_{i=1}^n\lambda_ie_i||\leq K||\sum_{i=1}^m\lambda_ie_i||.
\end{equation}
\end{itemize}
\end{enumerate}
\end{proposition}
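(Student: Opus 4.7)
The plan is to prove the two implications separately, with the main work concentrated in (i) $\Rightarrow$ (ii). Throughout, the natural objects to study are the partial sum projections $P_n$ defined formally by $P_n\bigl(\sum_k \lambda_k e_k\bigr) = \sum_{k=1}^n \lambda_k e_k$; condition \eqref{constant} is precisely the statement that the $P_n$ are uniformly bounded on $\langle e_k : k \in \mathbb{N}\rangle$, so everything reduces to understanding these operators.

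For (i) $\Rightarrow$ (ii), the density $X = [e_n : n \in \mathbb{N}]$ is immediate from the definition. The delicate point is uniform boundedness of $\{P_n\}$. I would introduce the auxiliary norm
\begin{equation*}
\vertiii{x} := \sup_{n \in \mathbb{N}} \Bigl\|\sum_{k=1}^n e_k^*(x)\, e_k\Bigr\|,
\end{equation*}
which is finite for every $x$ because the series converges. After checking $\vertiii{\cdot}$ is a norm and that $\|x\| \le \vertiii{x}$, the key step is to verify that $(X,\vertiii{\cdot})$ is complete: given a $\vertiii{\cdot}$-Cauchy sequence $\{x^{(j)}\}$, one shows that each coordinate sequence $\{e_k^*(x^{(j)})\}_j$ converges, produces a candidate limit $x = \sum_k \lambda_k e_k$, and verifies $\vertiii{\cdot}$-convergence by a standard $\varepsilon/2$ split between a large-$n$ tail and uniform control on the initial segment. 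Once both norms make $X$ Banach and $\|\cdot\| \le \vertiii{\cdot}$, the open mapping theorem (or the two-norm theorem) gives $\vertiii{\cdot} \le K\|\cdot\|$, which is exactly \eqref{constant}.

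For (ii) $\Rightarrow$ (i), the bound \eqref{constant} shows that $P_n$ is well-defined and bounded by $K$ on the dense subspace $\langle e_k\rangle$, so it extends uniquely to a bounded operator on $X$ with $\|P_n\| \le K$. An $\varepsilon/3$ argument then shows $P_n x \to x$ for all $x \in X$: approximate $x$ by some $y = \sum_{k=1}^N \mu_k e_k$ with $\|x-y\| < \varepsilon/(3K)$, and for $n \ge N$ use $\|P_n x - x\| \le \|P_n(x-y)\| + \|P_n y - y\| + \|y - x\|$, where the middle term is zero. This produces the expansion $x = \sum_k e_k^*(x) e_k$ with $e_k^*(x) := e_k^*(P_n x)$ for any $n \ge k$. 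Uniqueness of the coefficients is a direct consequence: if $\sum \lambda_k e_k = 0$, applying $P_n - P_{n-1}$ extracts $\lambda_n e_n = 0$, and since $e_n \ne 0$ we get $\lambda_n = 0$ for every $n$.

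The main obstacle is the completeness verification for $\vertiii{\cdot}$ in the forward direction, since this is where the abstract Baire-category machinery enters; the rest is essentially bookkeeping with the projections $P_n$ and standard approximation arguments.
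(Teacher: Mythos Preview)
Your argument is correct and is precisely the classical proof. Note, however, that the paper does not actually supply its own proof of this proposition: it is one of the preliminary results explicitly left to the literature (the approach in the cited reference \cite{linderstrauss} is exactly the one you outline, via the auxiliary norm $\sup_n \|P_n x\|$, completeness, and the open mapping theorem). So there is nothing to compare against beyond observing that you have reproduced the standard textbook argument the paper defers to.

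One cosmetic remark: the macro \verb|\vertiii| you use for the auxiliary norm is not defined in this paper's preamble, so if this proof were to be inserted verbatim you would need to replace it by something like $|\!|\!|x|\!|\!|$ or define the command locally.
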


\begin{remark}
The infimum number $K$ in \eqref{constant} is called constant of the basis. In the special case where this constant is unit, the basis is called monotone. It is clear that, for a basis to be monotone, condition \eqref{constant} reduces to showing that for any $n\in\mathbb{N}$ and $\lambda_1,...,\lambda_{n+1}\in\mathbb{R}$, there holds
$$\|\sum_{i=1}^n\lambda_ie_i\|\leq\|\sum_{i=1}^{n+1}\lambda_ie_i\|.$$
\end{remark}
\begin{example}
Let $1\leq p<\infty$. Recall by $\ell^p$ we denote the Banach spaces
$$\ell^p=\left\{x=\{x_n\}_{n\in\mathbb{N}}:\sum_{n=1}^\infty |x_n|^p<\infty\right\},$$
with norm
$$\|x\|_p= (\sum_{n=1}^\infty |x_n|^p)^{1/p}.$$

Then the sequence $\{e_n\}_{n\in\mathbb{N}}$, given by $e_n=(0,0,...,1,0,...)$
(n-position) is a monotone Schauder basis of $\ell^p$.
We call it the natural basis of $\ell^p$.
\end{example}
\begin{proof}
Let $x=(x_1,x_2...,x_n,...)\in\ell^p$.
 Define $s_n=\displaystyle\sum_{i=1}^nx_ie_i$. Then we get
 \begin{equation*}
 ||s_n-x||=(\sum_{i=n+1}^\infty|x_i|^p)^{1/p}
 \overset{n\rightarrow\infty}{\longrightarrow}0.
 \end{equation*}
 Moreover, for all $n\in\mathbb{N}$ and $\lambda_1,...,\lambda_{n+1}\in\mathbb{R}$,
 we have
 \begin{equation*}
  ||\sum_{i=1}^n\lambda_ie_i||=(\sum_{i=1}^n|\lambda_i|^p)^{1/p}
 \leq(\sum_{i=1}^{n+1}|\lambda_i|^p)^{1/p}=||\sum_{i=1}^{n+1}\lambda_ie_i||,
 \end{equation*}
 and the claim follows. \end{proof}
 
We now deduce some useful criterias for $w^*$-convergence in Banach spaces with Schauder bases.
\begin{proposition}\label{prop 1.1.10}
Let $X$ be a Banach space,
$x^*\in X^*\setminus\{0\}$ and a bounded sequence $\left\{x_n^*\right\}_{n\in\mathbb{N}}$ in $X^*$.
Assume there is norm-dense $S\subseteq X$ such that $$x_n^*(s)\overset{n\to\infty}{\longrightarrow}x^*(s),\quad\forall s\in S.$$ Then  $x_n^*\overset{w^*}\to x^*$. 
\end{proposition}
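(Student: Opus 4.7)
The plan is to use a classical three-$\varepsilon$ density argument, which is the standard route whenever one wants to promote pointwise convergence on a dense set to pointwise convergence on the whole space in the presence of uniform bounds. The key ingredients are: linearity of each functional, the uniform bound $M := \sup_n \|x_n^*\| < \infty$ given by hypothesis, boundedness of $x^*$ itself, and the norm-density of $S$. Recall that $w^*$-convergence of $x_n^*$ to $x^*$ means precisely $x_n^*(x) \to x^*(x)$ for every $x \in X$, so it suffices to verify this pointwise convergence for an arbitrary $x \in X$.

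The concrete plan is as follows. I would fix $x \in X$ and $\varepsilon > 0$. Using norm-density of $S$, I would select $s \in S$ with $\|x - s\| < \varepsilon / (3(M + \|x^*\| + 1))$, so that the error introduced by replacing $x$ with $s$ in either $x_n^*$ or $x^*$ is controlled uniformly in $n$. Then, using the hypothesis that $x_n^*(s) \to x^*(s)$ for this particular $s$, I would choose $N \in \mathbb{N}$ so that $|x_n^*(s) - x^*(s)| < \varepsilon/3$ for all $n \geq N$. The triangle inequality
\[
|x_n^*(x) - x^*(x)| \leq |x_n^*(x) - x_n^*(s)| + |x_n^*(s) - x^*(s)| + |x^*(s) - x^*(x)|
\]
combined with the functional-norm bounds $|x_n^*(x) - x_n^*(s)| \leq M\|x - s\|$ and $|x^*(s) - x^*(x)| \leq \|x^*\|\,\|x - s\|$ would then give $|x_n^*(x) - x^*(x)| < \varepsilon$ for all $n \geq N$, which is the required convergence.

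There is no real obstacle here: the only mild subtlety is ensuring that the approximation constant $\|x-s\|$ is chosen \emph{before} one looks at the index $N$, so that the bound $M\|x-s\|$ is genuinely uniform in $n$. This is exactly why the hypothesis of uniform boundedness of $\{x_n^*\}$ is essential; without it, pointwise convergence on a dense set need not extend to the whole space. The hypothesis $x^* \neq 0$ plays no role in this argument and is presumably included only because the proposition will later be invoked in a context where $x^*$ is produced as a nonzero $w^*$-limit.
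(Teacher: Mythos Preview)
Your proof is correct and follows essentially the same three-$\varepsilon$ density argument as the paper: both fix $x\in X$, approximate it by an element of $S$, and split $|x_n^*(x)-x^*(x)|$ into the same three pieces bounded using $M$, $\|x^*\|$, and the hypothesis of convergence on $S$. Your choice of approximation radius $\varepsilon/(3(M+\|x^*\|+1))$ is slightly cleaner than the paper's $\varepsilon/(3\max\{M,\|x^*\|\})$, and you are right that the assumption $x^*\neq 0$ is not actually needed for the argument.
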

\begin{proof}
The result is trivial if $x_n=0,\quad\forall n\in\mathbb{N}$. Therefore, since $\left\{x_n^*\right\}_{n\in\mathbb{N}}$ is bounded too, we may well assume that
$$0<M:=\sup_{n\in\mathbb{N}}\|x_n^*\|<\infty.$$
It suffices to show that $x^*(x)=\displaystyle\lim_{n\to\infty}x_n^*(x),\quad\forall x\in X$. Let $x\in X$. By density, there is a sequence $\left\{s_n\right\}_{n\in\mathbb{N}}\subseteq S$ with $x=\displaystyle\lim_{n\to\infty}s_n$. Let $\epsilon>0$ and consider $N,n_0\in\mathbb{N}$ such that
\begin{eqnarray*}
||s_N-x||<\displaystyle\frac{\epsilon}{3\max\left\{M,||x^*||\right\}}&\mbox{ and }&
|x_n^*(s_N)-x^*(s_N)|<\frac{\epsilon}{3},\quad\forall n\geq n_0.
\end{eqnarray*}
Then for any $n\geq n_0$, we have
\begin{eqnarray*}
|x_n^*(x)-x^*(x)|&\leq& |x_n^*(x)-x_n^*(s_N)|+|x_n^*(s_N)-x^*(s_N)|+||x^*(s_N)-x^*(x)|\\
&<&M||s_N-x||+\frac{\epsilon}{3}+\|x^*\|\|s_N-x\|
<\frac{\epsilon}{3}+\frac{\epsilon}{3}+\frac{\epsilon}{3}=\epsilon.
\end{eqnarray*}
The result is proved.\end{proof}
\begin{corollary}\label{cor 1.1.11}
Let $X$ be a Banach space with basis $\left\{e_n\right\}_{n\in\mathbb{N}}$. Consider
$x^*\in X^*$ and $\left\{x_k^*\right\}_{k\in\mathbb{N}}$ a bounded sequence in $X^*$ 
If $x_k^*(e_n)\overset{k\to\infty}{\longrightarrow}x^*(e_n)\quad\forall n\in\mathbb{N}$, then
$x_n^*\overset{w^*}\to x^*$. 
\end{corollary}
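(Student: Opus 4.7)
The plan is to reduce the corollary directly to Proposition \ref{prop 1.1.10}, which already handles pointwise convergence on a norm-dense subset. So I only need to promote the hypothesized convergence on the basis vectors $\{e_n\}_{n\in\mathbb{N}}$ to convergence on a dense subspace.

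First I would set $S=\langle e_n:n\in\mathbb{N}\rangle$, the linear span of the basis. By the very definition of a Schauder basis, every $x\in X$ is the norm-limit of its partial sums $\sum_{i=1}^n e_i^*(x)e_i \in S$, so $S$ is norm-dense in $X$ (equivalently, $X=[e_n:n\in\mathbb{N}]$ as recorded in Proposition \ref{equiv schauder}).

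Next, I would exploit linearity of each $x_k^*$ and $x^*$. Any $s\in S$ has the form $s=\sum_{i=1}^m\lambda_ie_i$ for some $m\in\mathbb{N}$ and scalars $\lambda_1,\dots,\lambda_m$, so
\begin{equation*}
x_k^*(s)=\sum_{i=1}^m\lambda_i x_k^*(e_i)\xrightarrow{k\to\infty}\sum_{i=1}^m\lambda_i x^*(e_i)=x^*(s),
\end{equation*}
where the limit is just a finite linear combination of the $m$ scalar limits furnished by the hypothesis.

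Finally, since $\{x_k^*\}_{k\in\mathbb{N}}$ is bounded by assumption and $x_k^*(s)\to x^*(s)$ for every $s$ in the norm-dense subspace $S$, Proposition \ref{prop 1.1.10} applies and yields $x_k^*\xrightarrow{w^*} x^*$. There is no real obstacle here; the only mild subtlety is to confirm that the norm-density of $S$ is built into the Schauder basis definition, after which the result is essentially a one-line application of the preceding proposition.
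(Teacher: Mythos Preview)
Your proposal is correct and follows exactly the same approach as the paper: define $S=\langle e_n:n\in\mathbb{N}\rangle$, use linearity to pass from convergence on the $e_n$ to convergence on all of $S$, and then invoke Proposition~\ref{prop 1.1.10} via the norm-density of $S$. You have simply written out explicitly what the paper compresses into one sentence.
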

\begin{proof}
Defining $S=\langle e_n:n\in\mathbb{N}\rangle$. Then $S$ is dense in $X$. Linearity of $\left\{x_k^*\right\}_{k\in\mathbb{N}}$, $x^*$ and  Proposition \eqref{prop 1.1.10} yield the result.
\end{proof}
\subsection{Basic sequences, blocks and equivalence}
In this section we introduce the notion basic sequences in Banach spaces and some elementary type of basic sequences called blocks.
\begin{definition}

 Let $X$ be a Banach space and
$\left\{x_n\right\}_{n\in\mathbb{N}}$ a sequence of pairwise distinct, non-zero elements of $X$.
The sequence $\left\{x_n\right\}_{n\in\mathbb{N}}$ 
is called basic sequence if it is Schauder basis of the subspace
$[x_n:n\in\mathbb{N}]$.

 We define the constant of the basic sequence $\left\{x_n\right\}_{n\in\mathbb{N}}$ 
as the constant of the Schauder basis of
$[x_n:n\in\mathbb{N}]$. 

 Finally, we define its biorthogonal functionals  as $x_n^*:[x_n:n\in\mathbb{N}]\rightarrow\mathbb{R}$ given by $$x_n^*(\sum_{k=1}^\infty\lambda_kx_k)=\lambda_n.$$
Since $\{x_n\}_{n\in\mathbb{N}}$ is basic, Proposition \eqref{biothogonal boundness} implies $x_n^*\in[x_k:k\in\mathbb{N}]^*,\quad\forall n\in\mathbb{N}$.
\end{definition}
\begin{remark}
Hahn-Banach Theorem implies that for each $n\in\mathbb{N}$, $x_n^*$ can be extended to an element of $X^*$. Therefore, without loss of generality, we may assume that  $x_n^*\in X^*$, for each $n\in\mathbb{N}$.
\end{remark}

We immediately get the following characterization:
\begin{corollary}\label{equiv basic} Let $X$ be a Banach space and a sequence $\left\{e_n\right\}_{n\in\mathbb{N}}$ of pairwise distinct, non-zero elements of $X$.
 The following statements are equivalent:
 \begin{enumerate}[(i)]
\item $\left\{e_n\right\}_{n\in\mathbb{N}}$ 
is basic sequence.
\item $\exists K>0$
 such that for any
$m>n\in\mathbb{N}$  and
$\lambda_1,...,\lambda_m\in\mathbb{R}$, there holds
\begin{equation}
||\sum_{i=1}^n\lambda_ie_i||\leq K||\sum_{i=1}^m\lambda_ie_i||.
\end{equation}
\end{enumerate}
\end{corollary}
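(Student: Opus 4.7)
The plan is to reduce Corollary \ref{equiv basic} to Proposition \ref{equiv schauder} by localizing to the closed linear span $Y := [e_n : n \in \mathbb{N}]$, viewed as a Banach space in its own right (with the restriction of the norm of $X$). By definition, $\{e_n\}_{n\in\mathbb{N}}$ is a basic sequence in $X$ precisely when it is a Schauder basis of $Y$, so the equivalence to be proved is exactly Proposition \ref{equiv schauder} applied to $Y$ instead of $X$, with the density hypothesis removed because it holds automatically.

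In more detail, I would argue as follows. For $(i)\Rightarrow(ii)$, assume $\{e_n\}_{n\in\mathbb{N}}$ is a basic sequence. Then it is a Schauder basis of $Y$, so Proposition \ref{equiv schauder} applied to $Y$ supplies a constant $K>0$ such that
\[
\Bigl\|\sum_{i=1}^n\lambda_ie_i\Bigr\|_Y\le K\,\Bigl\|\sum_{i=1}^m\lambda_ie_i\Bigr\|_Y
\]
for all $m>n$ and scalars $\lambda_1,\dots,\lambda_m$. Since the norm on $Y$ is inherited from $X$, the same inequality holds in $X$, which is condition $(ii)$.

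For the converse $(ii)\Rightarrow(i)$, suppose the inequality holds. I want to invoke the nontrivial direction of Proposition \ref{equiv schauder} in the Banach space $Y$. The required norm-density condition $Y=[e_n:n\in\mathbb{N}]$ is tautological from the definition of $Y$, and the inequality hypothesis transfers verbatim from $X$ to $Y$. Hence Proposition \ref{equiv schauder} yields that $\{e_n\}_{n\in\mathbb{N}}$ is a Schauder basis of $Y$, i.e. a basic sequence in $X$.

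There is no real obstacle here; the only thing that deserves a line of comment is that $Y$ is a closed subspace of a Banach space, hence itself a Banach space, so Proposition \ref{equiv schauder} is legitimately applicable with $Y$ in the role of the ambient space. Once that is observed, the corollary is merely the statement of Proposition \ref{equiv schauder} in $Y$, with the now-automatic density condition suppressed.
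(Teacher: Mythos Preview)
Your proposal is correct and is exactly the intended argument: the paper does not even write out a proof, simply stating that the corollary follows immediately from Proposition~\ref{equiv schauder}, which is precisely your reduction to the closed subspace $Y=[e_n:n\in\mathbb{N}]$. There is nothing to add.
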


We now define the notion of equivalence of two basic sequences.
\begin{definition}
Let $X,Y$ be Banach spaces. Consider the sequences
$\left\{x_n\right\}_{n\in\mathbb{N}}\subseteq X$ and $
\left\{y_n\right\}_{n\in\mathbb{N}}\subseteq Y$. The sequences 
$\left\{x_n\right\}_{n\in\mathbb{N}}$ and
$\left\{y_n\right\}_{n\in\mathbb{N}}$ are called equivalent if there exist
$c,C>0$ such that for any
$n\in\mathbb{N}\mbox{ and } \lambda_1,...,\lambda_n\in
\mathbb{R}$,
there holds
\begin{equation*}
c||\sum_{i=1}^n\lambda_n x_n||\leq ||\sum_{i=1}^n\lambda_ny_n||
\leq C||\sum_{i=1}^n\lambda_nx_n||.
\end{equation*}
\end{definition}
\begin{remark}
It is clear that equivalence of sequences is an equivalence relation which preserves basic sequences.
\end{remark}
\begin{remark}\label{series equivalnce} Let $X,Y$ be Banach spaces and $\left\{x_n\right\}_{n\in\mathbb{N}}\subseteq X,
\left\{y_n\right\}_{n\in\mathbb{N}}\subseteq Y$ equivalent sequences. Then the series
$\sum_{n=1}^\infty\alpha_nx_n$ converges iff the series
$\sum_{n=1}^\infty\alpha_ny_n$ converges.
\end{remark}
\begin{proof}Let $\left\{\alpha_n\right\}_{n\in\mathbb{N}}$ such that
$\sum_{n=1}^\infty\alpha_nx_n$ converges. We will show that the sequence of partial sums
$\left\{\sum_{i=1}^n\alpha_iy_
i\right\}_{n\in\mathbb{N}}$ is Cauchy. Indeed, 
for any $\epsilon>0$ there is $N\in\mathbb{N}$
such that for all $m>n>N$, we have
\begin{equation*}
||\sum_{i=n+1}^m\alpha_ix_i||<\frac{\epsilon}{C}.
\end{equation*}
Then we get
\begin{equation*}
||\sum_{i=n+1}^m\alpha_iy_i||<\epsilon.
\end{equation*}
The other way is identical.
\end{proof}
Equivalent sequences can be characterized in the following equivalent ways:
\begin{proposition}\label{1.3.2}
Let $X,Y$ be Banach spaces. Consider a basic sequence 
$\left\{x_n\right\}_{n\in\mathbb{N}}\subseteq X$ and a sequence $
\left\{y_n\right\}_{n\in\mathbb{N}}\subseteq Y$. Then the following are equivalent:
\begin{enumerate}[(i)]
 \item $\left\{x_n\right\}_{n\in\mathbb{N}}$
and $\left\{y_n\right\}_{n\in\mathbb{N}}$
are equivalent.
\item
  There is an isomorphism
$T:[x_n:n\in\mathbb{N}]\rightarrow [y_n:n\in\mathbb{N}]$, with
$T(x_n)=y_n,\quad\forall n\in\mathbb{N}$.
\end{enumerate}
\end{proposition}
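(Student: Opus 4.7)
The plan is to treat the two implications separately. Both directions hinge on the algebraic identity that, for finite scalars, $T(\sum_{i=1}^n \lambda_i x_i) = \sum_{i=1}^n \lambda_i y_i$, together with suitable norm inequalities coming from boundedness of $T$ and $T^{-1}$, respectively from the equivalence constants.

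For the implication (ii) $\Rightarrow$ (i), the argument is essentially immediate. Given an isomorphism $T$ with $T(x_n)=y_n$, linearity yields $T(\sum_{i=1}^n\lambda_i x_i)=\sum_{i=1}^n\lambda_i y_i$ for arbitrary finite scalars. Applying boundedness of $T$ and $T^{-1}$ then gives
$$\|T^{-1}\|^{-1}\,\Bigl\|\sum_{i=1}^n\lambda_i x_i\Bigr\| \;\leq\; \Bigl\|\sum_{i=1}^n\lambda_i y_i\Bigr\| \;\leq\; \|T\|\,\Bigl\|\sum_{i=1}^n\lambda_i x_i\Bigr\|,$$
so the equivalence holds with $c=\|T^{-1}\|^{-1}$ and $C=\|T\|$.

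For the implication (i) $\Rightarrow$ (ii), the goal is to construct $T$ on the dense linear span $\langle x_n:n\in\mathbb{N}\rangle$ and extend to the closure. One approach is to define $T(\sum_{i=1}^n\lambda_i x_i)=\sum_{i=1}^n\lambda_i y_i$ on the span, observe that the upper equivalence bound $C$ shows $T$ is bounded on this dense subspace, and extend uniquely by continuity to a bounded operator on $[x_n:n\in\mathbb{N}]$. An equivalent, more explicit approach is to use that $\{x_n\}$ is basic, so each $x\in[x_n:n\in\mathbb{N}]$ has a unique expansion $x=\sum_{i=1}^\infty\lambda_i x_i$; by Remark \ref{series equivalnce}, the series $\sum_{i=1}^\infty\lambda_i y_i$ converges in $Y$, so one may directly set $T(x)=\sum_{i=1}^\infty\lambda_i y_i$. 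Linearity of $T$ then follows either from density (in the first approach) or from linearity of the biorthogonal functionals together with continuity of addition (in the second). Passing to the limit in the equivalence inequality applied to partial sums gives
$$c\,\|x\| \;\leq\; \|T(x)\| \;\leq\; C\,\|x\|,\qquad\forall x\in[x_n:n\in\mathbb{N}],$$
so $T$ is bounded and bounded below, hence injective with closed range. The range of $T$ contains every $y_n$, so it contains the dense subspace $\langle y_n:n\in\mathbb{N}\rangle$, and being closed it equals $[y_n:n\in\mathbb{N}]$. Thus $T$ is the desired isomorphism.

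I do not anticipate any substantial obstacle. The only subtle point is the well-definedness of $T$ on the closure, and this is precisely where the hypothesis that $\{x_n\}$ is a basic sequence (rather than just a spanning set) enters, through uniqueness of the series expansion; everything else is a routine density-plus-limit argument.
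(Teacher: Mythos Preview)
Your proof is correct. The direction (ii) $\Rightarrow$ (i) matches the paper exactly, with the same constants $c=\|T^{-1}\|^{-1}$ and $C=\|T\|$. For (i) $\Rightarrow$ (ii), however, your route is more direct than the paper's: you pass to the limit in the equivalence inequalities to obtain $c\|x\|\le\|T(x)\|\le C\|x\|$ on all of $[x_n:n\in\mathbb{N}]$, which simultaneously gives boundedness, injectivity, and closed range, after which surjectivity follows because the closed range contains the dense span $\langle y_n:n\in\mathbb{N}\rangle$. The paper instead defines $T$ via the series expansion, establishes boundedness by writing the truncated maps $T_n$ as compositions $F_n\circ P_n$ (finite-rank operator composed with the basis projection) and invoking the Banach--Steinhaus theorem on the pointwise limit $T=\lim_n T_n$, then argues bijectivity from the fact that $\{y_n\}$ is basic and finally appeals to the Open Mapping Theorem for the inverse bound. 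Your argument is more elementary in that it avoids both of these heavier principles, extracting everything directly from the equivalence constants; the paper's approach, while longer, illustrates how the conclusion can be reached even without exploiting the explicit lower constant $c$ until the very end.
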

\begin{proof}
$(i)\Rightarrow (ii)$ 
We define the mapping $T:[x_n:n\in\mathbb{N}]\rightarrow [y_n:n\in\mathbb{N}]$ by
$$T(x)=T(\displaystyle\sum_{n=1}^\infty\alpha_nx_n)=
\displaystyle\sum_{n=1}^\infty\alpha_ny_n.$$ The mapping $T$ is well-defined and linear due to the fact that 
the sequence $\left\{x_n\right\}_{n\in\mathbb{N}}$ is basic and Remark \ref{series equivalnce}.  We first show that $T$ is bounded.
For any $n\in\mathbb{N}$, let us define the linear operators $T_n:[x_n:n\in\mathbb{N}]\rightarrow \langle y_1,...,y_n\rangle$, $F_n:\langle x_1,...,x_n\rangle\rightarrow \langle y_1,...,y_n\rangle$ and $P_n:[x_n:n\in\mathbb{N}]\to\langle x_1,...,x_n\rangle$, given respectively by
$$T_n(\sum_{i=1}^\infty\alpha_ix_i)=\sum_{i=1}^n\alpha_iy_i,$$
 $$F_n(\sum_{i=1}^n\alpha_ix_i)=\sum_{i=1}^n\alpha_iy_i,$$
 $$P_n(\sum_{i=1}^\infty\alpha_ix_i)=\sum_{i=1}^n\alpha_ix_i.$$
Notice that $F_n$ is bounded since its domain is finite dimensional. Moreover, since $\{x_n\}_{n\in\mathbb{N}}$ is basic, Corollary \eqref{equiv basic} implies that for any $x=\sum_{i=1}^\infty\lambda_ix_i\in X$, we have
$$P_n(x)=\|\sum_{i=1}^n\lambda_ix_i\|\leq K\|\sum_{i=1}^m\lambda_ix_i\|,\quad\forall m>n,$$
where $K$ is the constant of $\{x_n\}_{n\in\mathbb{N}}$.
Letting $m\to\infty$ we get $\|P_n(x)\|\leq K\|x\|$, hence $P_n$ is bounded. But $T_n=F_n\circ P_n$, so $T_n$ is bounded for any $n\in\mathbb{N}$.
  By definition of $T_n$, the following point-wise convergence holds:
 $$T(x)=\displaystyle\lim_{n\to \infty} T_n(x)
,\quad\forall x\in X.$$
Therefore, Banach-Steinhauss Theorem implies $T$ is bounded. We will also show that $T$ is a bijection and the result will come by the Open Mapping Theorem. Since $\{y_n\}_{n\in\mathbb{N}}$ is equivalent to $\{x_n\}_{n\in\mathbb{N}}$, it is basic as well. Therefore each $y\in[y_n:n\in\mathbb{N}]$ can be uniquely written as $y=\sum_{n=1}^\infty y_n$. This directly implies that $T$ is a bijection, since $T(x_n)=y_n,\quad\forall n\in\mathbb{N}$ and $T$ is bounded.

$(ii)\Rightarrow (i)$ Comes  immediately for $c=\displaystyle\frac{1}{||T^{-1}||}$ and
$C=||T||$.
\end{proof}
\begin{remark}\label{1.3.3}
Let $X$ be a Banach space with basis $\left\{x_n\right\}_{n\in\mathbb{N}}$ 
and $Y$ a Banach space. If there is a sequence
$\left\{y_n\right\}_{n\in\mathbb{N}}$ 
in $Y$ and $c,C>0$ such that for any
$n\in\mathbb{N}$ and $\lambda_1,...,\lambda_n\in
\mathbb{R}$
there holds
\begin{equation*}
c||\sum_{i=1}^n\lambda_nx_n||\leq ||\sum_{i=1}^n\lambda_ny_n||
\leq C||\sum_{i=1}^n\lambda_nx_n||.
\end{equation*}
Then $X$ embeds isomorphically in $Y$.
\end{remark}
\begin{proof}
By  Proposition \ref{1.3.2}, it is enough to show that the sequence $\left\{y_n\right\}_{n\in\mathbb{N}}$ is basic. Let $K$ be the constant of $\{x_n\}_{n\in\mathbb{N}}$.
Consider $m>n\in\mathbb{N}$ and $\lambda_1,...\lambda_m
 \in\mathbb{R}$. Then
 \begin{equation*}
 ||\sum_{i=1}^n\lambda_iy_i||\leq CK||\sum_{i=1}^m\lambda_ix_i||
 \leq\frac{CK}{c}||\sum_{i=1}^m\lambda_iy_i||,
 \end{equation*}
 and the claim is proved.
\end{proof}

We now restrict our attention to a specific class of basic sequences, called blocks. Blocks are much easier to handle than arbitrary basic sequences.
\begin{definition}
Let $X$ be a Banach space with basis $\left\{e_n\right\}_{n\in\mathbb{N}}$. 
A sequence 
$\left\{u_n\right\}_{n\in\mathbb{N}}$  of pairwise distinct non-zero elements of $X$ will be called block of $\left\{e_n\right\}_{n\in\mathbb{N}}$ if there is a sequence of real numbers
$\left\{\alpha_i\right\}_{i\in\mathbb{N}}$ and an increasing sequence
 $\left\{n_i\right\}_{i\in\mathbb{N}}$ of positive integers such that for any $k\in\mathbb{N}$, there holds
\begin{equation*}
u_k=\sum_{i=n_k+1}^{n_{k+1}}\alpha_ie_i
\end{equation*}
\end{definition}
\begin{remark}
It is worth mentioning that the notion of a block sequence is always defined with respect to a given Schauder basis.
\end{remark}
It is straightforward that blocks are basic sequences.
\begin{proposition}
Let $X$ be a Banach space with basis $\left\{e_n\right\}_{n\in\mathbb{N}}$
and constant $K$. Then every block sequence is basic of constant less or equal than $K$.
\end{proposition}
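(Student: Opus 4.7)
The plan is to apply Corollary \ref{equiv basic} directly: it suffices to exhibit, for every block sequence $\{u_k\}_{k\in\mathbb{N}}$ of $\{e_n\}_{n\in\mathbb{N}}$, a constant at most $K$ witnessing the inequality $\|\sum_{k=1}^n\lambda_k u_k\|\leq K\|\sum_{k=1}^m\lambda_k u_k\|$ for all $m>n$ and scalars $\lambda_1,\dots,\lambda_m$. The route is to expand both block sums in the underlying basis $\{e_n\}$ and invoke the basis-constant estimate \eqref{constant} for $\{e_n\}$ at the appropriate truncation index.

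First, I would fix $m>n$, scalars $\lambda_1,\dots,\lambda_m$, and write $u_k=\sum_{i=n_k+1}^{n_{k+1}}\alpha_i e_i$, so that
\[
\sum_{k=1}^n\lambda_k u_k=\sum_{i=n_1+1}^{n_{n+1}}\beta_i e_i, \qquad \sum_{k=1}^m\lambda_k u_k=\sum_{i=n_1+1}^{n_{m+1}}\beta_i e_i,
\]
where $\beta_i=\lambda_k\alpha_i$ whenever $n_k+1\le i\le n_{k+1}$ and $1\le k\le m$. Crucially, the two sums share the same coefficients $\beta_i$ on the common index range $n_1+1\le i\le n_{n+1}$, because the intervals $\{n_k+1,\dots,n_{k+1}\}$ are disjoint; this is the structural content of the block definition and is exactly what makes the argument go through.

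Next, I would pad both expressions by the zero coefficients $\beta_1=\cdots=\beta_{n_1}=0$ and $\beta_{n_{n+1}+1}=\cdots=0$ up to index $n_{m+1}$, so that both block sums become partial sums in $\{e_n\}$ of the form $\sum_{i=1}^{N}\beta_i e_i$ with $N=n_{n+1}$ and $N=n_{m+1}$ respectively. Since $n_{n+1}<n_{m+1}$, applying \eqref{constant} of Proposition \ref{equiv schauder} to $\{e_n\}$ yields
\[
\Bigl\|\sum_{k=1}^n\lambda_k u_k\Bigr\|=\Bigl\|\sum_{i=1}^{n_{n+1}}\beta_i e_i\Bigr\|\leq K\Bigl\|\sum_{i=1}^{n_{m+1}}\beta_i e_i\Bigr\|=K\Bigl\|\sum_{k=1}^m\lambda_k u_k\Bigr\|.
\]
Together with the fact that the $u_k$ are pairwise distinct and non-zero, Corollary \ref{equiv basic} then declares $\{u_k\}_{k\in\mathbb{N}}$ a basic sequence with constant at most $K$.

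There is no real obstacle here; the only point requiring a line of care is the disjointness of the blocks, which guarantees that no cancellation between different $u_k$'s occurs when passing to the $e_i$-expansion, so that the partial sum $\sum_{k=1}^n \lambda_k u_k$ really is the truncation of $\sum_{k=1}^m \lambda_k u_k$ at position $n_{n+1}$. Once that observation is in place, the bound is a direct consequence of the basis property of $\{e_n\}$.
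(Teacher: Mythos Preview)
Your proof is correct and follows exactly the same route as the paper: expand the block partial sums in the underlying basis $\{e_n\}$ and apply the basis-constant inequality \eqref{constant} at the truncation index $n_{n+1}$. The paper's version is simply more terse, writing the double sums $\sum_{j=1}^k\sum_{i=n_j+1}^{n_{j+1}}\lambda_j\alpha_i e_i$ directly rather than introducing the $\beta_i$ and the zero-padding, but the argument is identical.
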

\begin{proof}
For all $m>k\in\mathbb{N}$ and
$\lambda_1,...,\lambda_m\in\mathbb{R}$ we have
\begin{eqnarray*}
||\sum_{j=1}^k\lambda_ju_j||&=&||\sum_{j=1}^k\lambda_j\sum_{i=n_j+1}^{n_{j+1}}\alpha_ie_i||
=||\sum_{j=1}^k\sum_{i=n_j+1}^{n_{j+1}}\lambda_j\alpha_ie_i||\\
&\leq&K||\sum_{j=1}^m\sum_{i=n_j+1}^{n_{j+1}}\lambda_j\alpha_ie_i||\leq K||\sum_{j=1}^m\lambda_ju_j||,
\end{eqnarray*}
so the sequence $\left\{u_n\right\}_{n\in\mathbb{N}}$ is basic with constant less or equal than
  $K$.
 \end{proof}
 The following result gives some sufficient conditions for a basic sequence to be equivalent to a block up to subsequence. It is usually referred in literature as sliding hump argument.
 \begin{lemma} (Sliding Hump Argument)
Let $X$ be a Banach space with basis $\left\{e_n\right\}_{n\in\mathbb{N}}$ and a sequence $\left\{x_n
\right\}_{n\in\mathbb{N}}$ in $X$ such that 
\begin{itemize}
\item $\displaystyle\inf_{n\in\mathbb{N}}||x_n||>0.$
\item $\displaystyle\lim_{n\to\infty}e_k^*(x_n)=0,\quad\forall k\in\mathbb{N}.$
\end{itemize}
Then there is a subsequence $\left\{x_n'\right\}_{n\in\mathbb{N}}$ of $\left\{x_n\right\}_{n\in\mathbb{N}}$ which  equivalent to a block of $\left\{e_n\right\}_{n\in\mathbb{N}}$.
\end{lemma}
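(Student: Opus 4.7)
The plan is to construct, via a diagonal ``sliding hump'' selection, a subsequence $\{x'_k\} = \{x_{n_k}\}$ together with a genuine block sequence $\{u_k\}$ of $\{e_n\}_{n\in\mathbb{N}}$ whose terms approximate $x_{n_k}$ so closely that $\sum_k \|x_{n_k}-u_k\|$ is summable and small compared to $\delta := \inf_n \|x_n\| > 0$. Equivalence of $\{x_{n_k}\}$ and $\{u_k\}$ will then follow from a standard perturbation estimate using the basis constant $K$ of $\{e_n\}_{n\in\mathbb{N}}$.

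For the sliding hump: fix a summable sequence $\epsilon_k>0$ to be chosen later with $\sum_k \epsilon_k$ very small relative to $\delta/K$. Set $n_1 = 1$, and since $x_{n_1} = \sum_{i=1}^\infty e_i^*(x_{n_1})e_i$, choose $q_1 \in \mathbb{N}$ with $\|x_{n_1} - \sum_{i=1}^{q_1} e_i^*(x_{n_1}) e_i\| < \epsilon_1$; put $p_1 = 0$ and $u_1 = \sum_{i=1}^{q_1} e_i^*(x_{n_1})e_i$. Inductively, assume $q_{k-1}$ and $n_{k-1}$ have been constructed, and set $p_k = q_{k-1}$. The partial sum projection $P_{p_k}:X \to \langle e_1,\dots,e_{p_k}\rangle$ is bounded (as in the proof of Proposition \ref{1.3.2}), and its range is finite-dimensional; since $e_i^*(x_n) \to 0$ as $n\to\infty$ for each $i \le p_k$ by hypothesis, coordinate-wise convergence is norm convergence on this finite-dimensional range, so $\|P_{p_k}(x_n)\| \to 0$. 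Pick $n_k > n_{k-1}$ with $\|P_{p_k}(x_{n_k})\| < \epsilon_k/2$, then pick $q_k > p_k$ with $\|x_{n_k} - P_{q_k}(x_{n_k})\| < \epsilon_k/2$, and set $u_k = P_{q_k}(x_{n_k}) - P_{p_k}(x_{n_k}) = \sum_{i=p_k+1}^{q_k} e_i^*(x_{n_k}) e_i$. By construction $\{u_k\}$ is a block of $\{e_n\}$ and $\|x_{n_k} - u_k\| < \epsilon_k$.

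For equivalence, note that $\|u_k\| \ge \delta - \epsilon_k \ge \delta/2$ once $\epsilon_k \le \delta/2$, so $\{u_k\}$ is a block of non-zero terms, basic with constant at most $K$. For any scalars $\lambda_1,\dots,\lambda_n$, the block basis estimate gives $|\lambda_k|\|u_k\| \le 2K\|\sum_{j=1}^n \lambda_j u_j\|$, hence $|\lambda_k| \le (4K/\delta) \|\sum_j \lambda_j u_j\|$. Therefore
\begin{equation*}
\Bigl\|\sum_{k=1}^n \lambda_k(x_{n_k}-u_k)\Bigr\| \le \sum_{k=1}^n |\lambda_k|\epsilon_k \le \frac{4K}{\delta}\Bigl(\sum_{k=1}^\infty \epsilon_k\Bigr) \Bigl\|\sum_{j=1}^n \lambda_j u_j\Bigr\|.
\end{equation*}
Choosing the $\epsilon_k$ so that $\sum_k \epsilon_k < \delta/(8K)$ makes the prefactor at most $1/2$, and the triangle inequality then yields $\tfrac{1}{2}\|\sum \lambda_k u_k\| \le \|\sum \lambda_k x_{n_k}\| \le \tfrac{3}{2}\|\sum \lambda_k u_k\|$, which is the equivalence.

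The main obstacle is the bookkeeping in the perturbation step: one needs the lower bound $\inf_k\|u_k\|>0$ (secured via $\delta$ and small $\epsilon_k$) and the block-basis coefficient bound $|\lambda_k|\|u_k\| \le 2K\|\sum_j \lambda_j u_j\|$ in order to convert the $\ell^1$-type estimate $\sum |\lambda_k|\epsilon_k$ into something controlled by $\|\sum \lambda_j u_j\|$; the sliding hump construction itself is routine once one recognizes that the hypothesis $e_k^*(x_n)\to 0$ gives $\|P_N(x_n)\|\to 0$ on any fixed finite-dimensional head.
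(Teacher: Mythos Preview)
Your proof is correct and is the standard Bessaga--Pe\l{}czy\'nski sliding-hump argument; the paper itself states this lemma without proof, deferring to the textbook literature, so there is no paper proof to compare against. One cosmetic point: the paper's definition of a block requires the index sequence $\{n_i\}$ to consist of positive integers, so strictly speaking your $p_1=0$ is off by one; this is harmless, since you could equally well begin the construction at $k=2$ or shift indices.
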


We prove the following useful result about embeddings.

\begin{proposition}\label{prop 1.4.5}
Let $X$ be a Banach space with Schauder basis $\left\{e_n\right\}_{n\in\mathbb{N}}$. Assume $Y$ is a Banach space which embedds isomorphically in $X$, with Schauder basis $\left\{y_n\right\}_{n\in\mathbb{N}}$, satisfying
$$0<m\leq\inf_{n\in\mathbb{N}}\|y_n\|\leq\sup_{n\in\mathbb{N}}\|y_n\|\leq M<\infty.$$
 Then there 
 is a block of $\left\{e_n\right\}_{n\in\mathbb{N}}$ equivalent to  $\left\{y_n\right\}_{n\in\mathbb{N}}$.
\end{proposition}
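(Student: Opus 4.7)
The plan is to transfer the question from $Y$ to $X$ via the embedding and then reduce matters to the Sliding Hump Argument. Denote the isomorphic embedding by $T:Y\to X$ and set $x_n:=T(y_n)$. By Proposition \ref{1.3.2}, $\{x_n\}_{n\in\mathbb{N}}$ is basic in $X$ and equivalent to $\{y_n\}_{n\in\mathbb{N}}$; moreover the uniform bounds on $\|y_n\|$ transfer through $T$ to give
$$0<\|T^{-1}\|^{-1}m\leq\|x_n\|\leq\|T\|M<\infty.$$
The Sliding Hump Argument would then produce a block of $\{e_n\}$ equivalent to a subsequence of $\{x_n\}$ (and hence to one extracted from $\{y_n\}$ through $T$) as soon as we verify the coordinate-null condition $e_k^*(x_n)\to 0$ for every $k\in\mathbb{N}$. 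This is not automatic, so a preliminary step is needed.

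That preliminary step combines a diagonalization with a differencing trick. For each fixed $k$ the scalar sequence $\{e_k^*(x_n)\}_n$ is bounded since $\{x_n\}$ is bounded and $e_k^*\in X^*$, so a Cantor diagonal argument extracts a subsequence, still denoted $\{x_n\}$, along which $e_k^*(x_n)$ converges (to some $a_k$) for every $k$. Setting $z_j:=x_{2j}-x_{2j-1}$ then forces $e_k^*(z_j)\to 0$ for every $k$, while plainly $\sup_j\|z_j\|\leq 2\|T\|M<\infty$. The main obstacle I expect is verifying the remaining hypothesis $\inf_j\|z_j\|>0$: to this end I will appeal to the basis structure of $\{y_n\}$ in $Y$, using the biorthogonal functionals $\{y_n^*\}$ together with the standard estimate $\|y_n^*\|\cdot\|y_n\|\leq 2K_Y$, where $K_Y$ is the basis constant of $\{y_n\}$. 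Since $y_{2j-1}^*(y_{2j}-y_{2j-1})=-1$, this yields
$$\|y_{2j}-y_{2j-1}\|\geq\frac{1}{\|y_{2j-1}^*\|}\geq\frac{m}{2K_Y},$$
and pulling through $T^{-1}$ gives $\|z_j\|\geq m/(2K_Y\|T^{-1}\|)>0$.

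With both hypotheses of the Sliding Hump Argument in hand, applying it to $\{z_j\}$ produces a subsequence $\{z_{j_\ell}\}$ equivalent to a block $\{u_\ell\}$ of $\{e_n\}_{n\in\mathbb{N}}$. Because each $z_{j_\ell}=T(y_{2j_\ell}-y_{2j_\ell-1})$ and $T$ is an isomorphism onto its image, $\{u_\ell\}$ is equivalent, via $T$, to the block sequence $\{y_{2j_\ell}-y_{2j_\ell-1}\}$ of $\{y_n\}$; by Remark \ref{1.3.3} this is exactly the block of $\{e_n\}$ associated to $\{y_n\}$ required by the statement. The delicate point throughout is the interplay between the diagonalization and the differencing trick: without the lower norm bound provided by the biorthogonal functionals in $Y$, the Sliding Hump hypotheses would fail after differencing.
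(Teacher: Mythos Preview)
Your overall architecture coincides with the paper's: push $\{y_n\}$ into $X$ via the embedding to get a bounded basic sequence $\{x_n\}$, diagonalize so that each coordinate functional $e_k^*$ converges along a subsequence, pass to differences to force $e_k^*(z_j)\to 0$, and then invoke the Sliding Hump Argument. Your verification of the lower bound $\inf_j\|z_j\|>0$ via the biorthogonal functionals is a genuine addition: the paper omits this check entirely, even though it is a stated hypothesis of the Sliding Hump Lemma.

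There is, however, a real gap in your final step, and it is one the paper's own proof shares. What the argument actually produces is a block $\{u_\ell\}$ of $\{e_n\}$ equivalent to the block sequence $\{y_{2j_\ell}-y_{2j_\ell-1}\}$ of $\{y_n\}$, not to $\{y_n\}$ itself. Your appeal to Remark~\ref{1.3.3} does not bridge this: that remark only says that equivalence to a basis yields an embedding, which is the wrong direction and in any case does not upgrade equivalence with a block of $\{y_n\}$ to equivalence with $\{y_n\}$. The paper simply writes ``Clearly $\{u_n\}$ is equivalent to $\{y_n\}$'', which is equally unjustified in general. As stated, the proposition is in fact too strong; what both arguments prove is the Bessaga--Pe\l czy\'nski-type conclusion that some block of $\{e_n\}$ is equivalent to some block of $\{y_n\}$. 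For the paper's intended application (with $Y=\ell^1$ and $\{y_n\}$ the unit vector basis) the gap is harmless, since every seminormalized block of the unit vector basis of $\ell^1$ is again equivalent to that basis; but you should state this explicitly rather than invoke Remark~\ref{1.3.3}.
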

\begin{proof}
Since $Y$ embedds in $X$, there is a sequence
$\left\{x_n\right\}_{n\in\mathbb{N}}$ in $X$ which is basic and equivalent to $\left\{y_n\right\}_{n\in\mathbb{N}}$. Therefore, there are $m',M'>0$ such that
$$m'\leq||x_n||\leq M'\quad\forall n\in\mathbb{N}.$$ For any $k\in\mathbb{N}$,
we have $|e_k^*(x_n)|\leq||e_k^*||M,\quad\forall n\in\mathbb{N}$,
so the real numbers sequence $\left\{e_k^*(x_n)\right\}_{n\in\mathbb{N}}$ is bounded for all $k\in\mathbb{N}$.
Therefore, with a diagonal argument, we may construct a subsequence $\left\{x_n'\right\}_{n\in\mathbb{N}}$ of $\left\{x_n\right\}_{n\in\mathbb{N}}$ such  that the sequence $\left\{e_k^*(x_n')\right\}_{n\in\mathbb{N}}$ converges for all $k\in\mathbb{N}$. Let us denote $z_n=x_{n+1}'-x_n'$.
For any $k\in\mathbb{N}$, we clearly have
$$\displaystyle\lim_{n\to\infty} e_k^*(z_n)=0.$$ Using a sliding hump argument, we may find a subsequence $\left\{z_n'\right\}_{n\in\mathbb{N}}$ and a block $\left\{u_n\right\}_{n\in\mathbb{N}}$ of $\left\{x_n\right\}_{n\in\mathbb{N}}$ which are equivalent. Clearly $\left\{u_n\right\}_{n\in\mathbb{N}}$
is equivalent to $\left\{y_n\right\}_{n\in\mathbb{N}}$.
\end{proof}

We finally mention, without proof, a very important Theorem which will turn out to be essential for our exposition. As known,  $\ell^1$ cannot embedd in a space with separable dual since its dual coincides with $\ell^{\infty}$.  H. Rosenthal proved a partial inverse of this, the famous $\ell^1$-Theorem \cite{rosenthal}.
\begin{theorem}($\ell^1$-Theorem)
Let $X$ be a Banach space and $\left\{x_n\right\}_{n\in\mathbb{N}}$ a bounded sequence in $X$.
Then there holds exclusively one of the following:\\
i) The sequence $\left\{x_n\right\}_{n\in\mathbb{N}}$ has weak-Cauchy subsequence.\\
ii) The sequence $\left\{x_n\right\}_{n\in\mathbb{N}}$ is basic and equivalent to the standard basis of $\ell^1$.
\end{theorem}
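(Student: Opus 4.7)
The plan is to follow Rosenthal's original argument, which separates cleanly into a Ramsey-type combinatorial dichotomy and a short functional-analytic extraction. First I would set up by equipping $K = B_{X^*}$ with the $w^*$-topology, so that $K$ is compact by Banach--Alaoglu. For each $n$ define $f_n : K \to \mathbb{R}$ by $f_n(x^*) = x^*(x_n)$; these are $w^*$-continuous and uniformly bounded by $M := \sup_n \|x_n\|$. Since every element of $X^*$ is a scalar multiple of an element of $K$, a subsequence $\{x_{n_k}\}$ is weakly Cauchy in $X$ iff $\{f_{n_k}(x^*)\}$ converges for every $x^* \in K$. The first alternative of the theorem is thus exactly that some subsequence of $\{f_n\}$ converges pointwise on $K$.

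Supposing no subsequence of $\{f_n\}$ converges pointwise on $K$, I would establish the following combinatorial dichotomy: there exist rationals $a < b$ and a subsequence $\{f_{n_k}\}$ which is $(a,b)$-\emph{independent}, in the sense that for every pair of disjoint finite sets $F_0, F_1 \subset \mathbb{N}$ there is $s \in K$ with $f_{n_k}(s) < a$ for $k \in F_0$ and $f_{n_k}(s) > b$ for $k \in F_1$. For each rational pair $a<b$ one associates the $\{0,1\}$-valued indicators $\mathbf{1}_{\{f_n<a\}}$ and $\mathbf{1}_{\{f_n>b\}}$ on $K$ and invokes an infinite Ramsey theorem --- Nash--Williams, Galvin--Prikry on Borel colourings of $[\mathbb{N}]^\omega$, or Rosenthal's direct tree construction --- to either extract a sub-sub-sequence stabilising those indicators at every $s \in K$, or produce the independence above. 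Diagonalising over the countably many rational pairs yields a single subsequence for which either all pointwise limits $\lim_k f_{n_k}(s)$ exist (a weakly Cauchy subsequence, contradicting our standing assumption) or independence holds at some specific $(a,b)$.

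To extract $\ell^1$-equivalence from independence, fix scalars $\lambda_1, \dots, \lambda_N$, set $F_1 = \{k \leq N : \lambda_k > 0\}$ and $F_0 = \{k \leq N : \lambda_k < 0\}$, and apply independence once with this choice to obtain $x^* \in B_{X^*}$ satisfying $x^*(x_{n_k}) > b$ on $F_1$ and $x^*(x_{n_k}) < a$ on $F_0$, and once more with the roles swapped to obtain $y^* \in B_{X^*}$ with $y^*(x_{n_k}) < a$ on $F_1$ and $y^*(x_{n_k}) > b$ on $F_0$. A sign-by-sign computation gives
$$\Bigl(\frac{x^* - y^*}{2}\Bigr)\Bigl(\sum_{k=1}^N \lambda_k x_{n_k}\Bigr) \geq \frac{b-a}{2}\sum_{k=1}^N |\lambda_k|,$$
and since $\|(x^* - y^*)/2\| \leq 1$ we conclude $\|\sum_k \lambda_k x_{n_k}\| \geq \tfrac{b-a}{2}\sum_k |\lambda_k|$; the matching upper bound $\|\sum_k \lambda_k x_{n_k}\| \leq M \sum_k |\lambda_k|$ is immediate from the triangle inequality. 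Applying Remark \ref{1.3.3} with $\ell^1$ playing the role of $X$ (with its monotone natural basis) and $\{x_{n_k}\}$ playing the role of $\{y_n\}$ shows that $\{x_{n_k}\}$ is basic and equivalent to the standard $\ell^1$-basis.

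The main obstacle is plainly the combinatorial dichotomy in the second step. Ramsey-theoretic machinery of the Nash--Williams or Galvin--Prikry type lies well outside the Schauder-basis toolbox developed in the preliminaries, and proving it from scratch would be essentially a self-contained paper; presumably this is why the author opts to quote Rosenthal's theorem as a black box. The $w^*$-setup of the first step and the $\ell^1$-extraction of the third step are, by contrast, routine once independence has been secured.
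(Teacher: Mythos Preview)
The paper does not prove this theorem: it is explicitly stated ``without proof'' and the reader is referred to \cite{rosenthal} for the nontrivial direction, with only the remark that the easy direction follows because the standard $\ell^1$-basis has no weakly Cauchy subsequence. Your sketch is a faithful outline of Rosenthal's original argument --- the reduction to pointwise convergence on $(B_{X^*},w^*)$, the Ramsey-type dichotomy producing an $(a,b)$-independent subsequence, and the sign-splitting functional extraction --- and you correctly identify the combinatorial dichotomy as the substantive step lying outside the Schauder-basis toolkit of the preliminaries; this is indeed why the author quotes the result as a black box, as you yourself anticipate.

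One small observation: your argument (and Rosenthal's) yields a \emph{subsequence} $\{x_{n_k}\}$ equivalent to the $\ell^1$-basis, whereas alternative (ii) as written in the paper asserts that the full sequence $\{x_n\}$ is basic and $\ell^1$-equivalent. This is a minor imprecision in the paper's formulation rather than a flaw in your proof; the subsequence version is the correct statement and is what the paper actually uses downstream.
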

The one direction is immediate since the standard basis of $ \ell^1$  
cannot have a weak-Cauchy subsequence. The other direction is much more complicated though. Reader can find more in \cite{rosenthal} .
As an application of Rosenthal's Theorem we prove the following useful Proposition:
\begin{proposition}\label{1.3.7}
Let $X$ a Banach space not containing
$\ell^1$.
 Then every infinite dimensional subspace of $X$ has a weak-Cauchy unitary sequence.
 \end{proposition}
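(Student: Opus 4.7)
The plan is to apply Rosenthal's $\ell^1$-theorem to a normalized sequence inside the given subspace and use the hypothesis to exclude the $\ell^1$ alternative.

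First, let $Y$ be an arbitrary infinite dimensional subspace of $X$. Since $Y$ is infinite dimensional, I can inductively pick a sequence $\{v_n\}_{n\in\mathbb{N}}$ of linearly independent nonzero elements of $Y$ and set $y_n = v_n/\|v_n\|$. This produces a sequence $\{y_n\}_{n\in\mathbb{N}} \subseteq Y$ with $\|y_n\|=1$ for every $n$, so in particular $\{y_n\}_{n\in\mathbb{N}}$ is a bounded sequence in $X$.

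Next, I apply Rosenthal's $\ell^1$-theorem to $\{y_n\}_{n\in\mathbb{N}}$. Exactly one of the two alternatives holds: either $\{y_n\}_{n\in\mathbb{N}}$ has a weak-Cauchy subsequence, or (passing to a subsequence if needed) the sequence is basic and equivalent to the standard basis of $\ell^1$. The second alternative is impossible under our hypothesis: if it held, Proposition \ref{1.3.2} would produce an isomorphism from $\ell^1 = [e_n : n \in \mathbb{N}]$ onto $[y_n : n \in \mathbb{N}] \subseteq Y \subseteq X$, so $\ell^1$ would embed in $X$, contradicting the assumption. Consequently the first alternative must hold.

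Thus there exists a subsequence $\{y_{n_k}\}_{k\in\mathbb{N}}$ that is weak-Cauchy, and since it is a subsequence of a unit-norm sequence, it is itself a unit-norm (unitary) sequence in $Y$, as required. There is no real obstacle here beyond correctly invoking Rosenthal's dichotomy; the only point that deserves care is observing that the span $[y_n : n \in \mathbb{N}]$ lies inside $X$ so that the non-embedding hypothesis on $X$ can be legitimately applied to rule out the $\ell^1$-equivalent case.
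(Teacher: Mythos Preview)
Your proof is correct, and in fact it is more direct than the paper's argument. You normalize first and apply Rosenthal's dichotomy to the unit-norm sequence, ruling out the $\ell^1$ alternative by hypothesis and immediately obtaining a weak-Cauchy unit-norm subsequence. The paper instead starts from a sequence in $B_Y$ with no norm-convergent subsequence, extracts a weak-Cauchy subsequence via Rosenthal, then takes successive differences $s_{n_{k+1}}-s_{n_k}$ to produce a \emph{weakly null} sequence, uses the non-convergence to bound these differences away from zero, and finally normalizes. The paper's extra work thus buys a stronger conclusion than what the proposition states: a unitary sequence that is weakly null rather than merely weak-Cauchy. For the proposition as written, your shorter argument is entirely adequate; the linear-independence step is even unnecessary, since any unit-norm sequence would do.
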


\begin{proof}
Let $Y$ be infinite dimensional subspace of $X$. Then $B_Y$ is not norm-compact.
Therefore there is $\left\{s_n\right\}_{n\in\mathbb{N}}\subseteq B_Y$
 with non-convergent subsequence. But $\ell^1$-Theorem  and the assumption that
$\ell^1$ does not embed in $X$,
the sequence $\left\{s_n\right\}_{n\in\mathbb{N}}$ has a subsequence $\left\{s_{n_k}\right\}_{k\in\mathbb{N}}$
which is weak-Cauchy. So the sequence $\left\{s_{n_{k+1}}-s_{n_k}\right\}_{k\in\mathbb{N}}$
is weakly null. Moreover, since $\left\{s_{n_k}\right\}_{k\in\mathbb{N}}$ is not norm-convergent, there is $\theta>0$ such that the following holds:
\begin{equation}\label{2}
\forall n\in\mathbb{N}\quad\exists k,m\in\mathbb{N}:n<k
<m \quad\mbox{and}\quad||s_k-s_m||\geq\theta .
\end{equation}
By \eqref{2} we determine an increasing sequence $\left\{p_n\right\}_{n\in\mathbb{N}}
\subseteq\mathbb{N}$ such that
\begin{equation*}
\displaystyle||s_{p_{2n}}-s_{p_{2n-1}}||\geq\theta,\quad\forall n\in\mathbb{N}.
\end{equation*}
Defining $u_n=s_{p_{2n}}-s_{p_{2n-1}}$ we get
$u_n\overset{w}{\longrightarrow}0$ and $||u_n||\geq\theta,\quad\forall
n\in\mathbb{N}$.
Hence, the sequence $\left\{z_n\right\}_{n\in\mathbb{N}}$ defined by $z_n=\|u_n\|^{-1}u_n$
is unitary and weakly null.\end{proof}

\section{Definition of $\mathcal{JT}$ and non-separability of the conjugate}
In this section, we define the $\mathcal{JT}$ space and summarize some of its basic properties. As mentioned in the introduction, $\mathcal{JT}$ is the first example of a non-separable Banach space which does not contain $\ell^1$. In this section, we give the basic definitions about $\mathcal{JT}$ and show that the conjugate space is non-separable.

Let us begin with some basic definitions on the Cantor tree, which be the natural index set to define our basis. These definitions will turn out to be very important and will constantly be used in the following. 

\begin{itemize}
\item We define the Cantor tree as follows:$$2^{<\mathbb{N}}=\left\{s=(s_1,...,s_n):s_i\in\left\{0,1\right\},\quad\forall i=1,...,n,\quad n\in\mathbb{N}\right\}\cup\left\{\emptyset\right\}.$$  The empty set $\emptyset$ is called root of the tree. Elements of $2^{<\mathbb{N}}$ are called nodes of the tree.
\item We also define the set of sequences of $0$ and $1$ as follows:
$$2^{\mathbb{N}}=\left\{\sigma=(\sigma_n)_{n\in\mathbb{N}}:\sigma_i\in\left\{0,1\right\},\quad
\forall i\in\mathbb{N}\right\}.$$
\item We define the level function $|\cdot|:2^{<\mathbb{N}}\rightarrow \mathbb{N}\cup\left\{0\right\}$ by 
\begin{equation*}
|s| =\begin{cases}0,\quad s=\emptyset,\\
n,\quad s=(s_1,...,s_n).
\end{cases}
\end{equation*}
\item We define the partial ordering '$\sqsubseteq$' on $2^{<\mathbb{N}}$ as follows:
\begin{itemize}
\item $\emptyset\sqsubseteq s,\quad\forall s\in 2^{<\mathbb{N}}$
\item If $s,u\in 2^{<\mathbb{N}}$with $s,u\neq\emptyset$,
then $s\sqsubseteq t\Leftrightarrow |s|\leq|t|$ and $s_i=t_i,\quad\forall i=1,...,|s|.$
\end{itemize}
\item  For $\sigma\in 2^{\mathbb{N}}$ and $n\in\mathbb{N}$, we will denote ${\sigma|_n}=(\sigma_1,...,\sigma_n)\in 2^{<\mathbb{N}}$. If we consider pairwise distinct $\sigma^1,...,\sigma^n\in 2^\mathbb{N}$, then there is $N\in\mathbb{N}$ such that ${\sigma^i}|_N\neq {\sigma^j}|_N\quad\forall i,j\in\left\{1,...,n\right\}$ with $i\neq j$. The minimum positive integer with this property is called separation level of $\sigma^1,...,\sigma^n$.
\item Let $I\subseteq 2^{<\mathbb{N}}$ such that for any $s,t\in I$ we have either $s\sqsubseteq t$ or $t\sqsubseteq s$. If for any $s,t\in I$ and $w\in 2^{<\mathbb{N}}$ such that $s\sqsubseteq w\sqsubseteq t$, we have that $w\in I$, then $I$ is called an interval.  Finite intervals are called segments and are typically denoted by $F$, while infinite intervals are called branches and are typically denoted by $B$.
\item A segment $F$ can be uniquely written as $F=\left\{s_1,s_2,...,s_n\right\}$ where $s_i\in 2^{<\mathbb{N}},\quad\forall i=1,...,n$  and $s_1\sqsubseteq s_2 \sqsubseteq,...,\sqsubseteq s_n$.
Node $s_1$ is called  initial node of $F$ and is denoted as $\inode (F)$, while $s_n$ is called  ending node of $F$ and is denoted as $\enode (F)$. The nodes $\inode (F)$ and $\enode (F)$ are called endpoints of $F$. It is clear that for any $s,t\in 2^{<\mathbb{N}}$ with $s\sqsubseteq t$, there is segment $F$ with $\inode (F)=s$ and $\enode (F)=t$. 
 \item For any branch $B$ there is unique $\sigma(B)\in 2^{\mathbb{N}}$ and unique $n\in\mathbb{N}$ such that $B=\left\{(\sigma(B)_{n+k})_{k=0}^\infty\right\}$. We define the initial node of $B$ as $\inode (B)=\sigma(B)_n$.
 \item Let $B_1,...,B_n$ be pairwise distinct branches. Then $\sigma(B_1),...,\sigma(B_n)$ are clearly pairwise distinct too. We define the separation level of $B_1,...,B_n$ as the minimum positive integer $N$ such that ${\sigma(B_i)}|_N\neq {\sigma(B_j)}|_N,\quad\forall i,j\in\left\{1,...,n\right\}$ with $i\neq j$ and $N\geq \inode(B_i),\quad\forall i=1,...,n$.
\item We define $t_1=\emptyset$ and $t_2=(0)$. For $n>2$ we consider 
$t_n=(s_1,...,s_m)$. If for all $i=1,...m$ we have $s_i=1$, we define
$t_{n+1}=(s_1',...,s_m',s_{m+1}')$ where $s_i'=0$ for all $i=1,...,m+1$. Alternatively we consider $i_0=\max\left\{i\in\left\{1,...,m\right\}:s_i=0\right\}$ and define $t_{n+1}=(s_1',...,s_m')$ with 
$s_i'=s_i,\quad \forall i=1,...,i_0-1$, $s_{i_0}'=1$ and $s_i'=0,\quad\forall i=i_0+1,...,m$. By this enumeration, it is clear that $2^{<\mathbb{N}}=\left\{t_n:n\in\mathbb{N}\right\}$ and
that $2^{<\mathbb{N}}$ is countable.
\item We observe that for any $n\in\mathbb{N}$, we have that $|t_n|=[\log_2 n]$ where $[\cdot]$ denotes the integer part of a positive number.
\item For $s\in 2^{<\mathbb{N}}$, we denote $e_s=\mathcal{X}_{\left\{s\right\}}$.
In particular we denote $e_n=\mathcal{X}_{\left\{t_n\right\}}$. Clearly the sequences
$\left\{e_s\right\}_{s\in 2^{<\mathbb{N}}}$ and $\left\{e_n\right\}_{n\in\mathbb{N}}$ coincide and the represent the sequence of the characteristic functions of the nodes of the Cantor tree.
\end{itemize}

We are now in the position to define the James Tree Space.
\begin{definition}
We define
\begin{equation*}
\mathcal{JT}=\left\{x:2^{<\mathbb{N}}\rightarrow\mathbb{R}:\sup\left\{\sum_{i=1}^m|\sum_{s\in F_i}
x_s|^2
\right\}<\infty\right\},
\end{equation*}
where $\sup$ is taken over all finite families of pairwise disjoint segments
$\left\{F_i\right\}_{i=1}^m$, $m\in\mathbb{N}$. It is immediate that $\mathcal{JT}$ is infinite dimensional vector space.
For $x\in\mathcal{JT}$, we define $$||x||=\sup\left\{\displaystyle\sum_{i=1}^m|\sum_{s\in F_i}
x_s|^2\right\}^{1/2},$$  where $\sup$ is taken over all finite families of pairwise disjoint  segments  $\left\{F_i\right\}_{i=1}^m$.
\end{definition}
\begin{remark} Given $x\in\mathcal{JT}$, we define its support as
$$\supp(x)=\left\{s\in 2^{<\mathbb{N}}:x_s\neq 0\right\}.$$
It is clear that the supremum taken for $\|x\|$ can be restricted to all finite families of pairwise disjoint segments contained in $\supp (x)$.
\end{remark}
\begin{proposition}
$(\mathcal{JT},||\cdot||)$ is a Banach space.
\end{proposition}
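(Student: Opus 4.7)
The plan is to verify the two standard ingredients: first that $\|\cdot\|$ is indeed a norm on $\mathcal{JT}$, and second that $\mathcal{JT}$ is complete with respect to it.

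For the norm properties, positive homogeneity $\|\lambda x\|=|\lambda|\|x\|$ is immediate by pulling $|\lambda|$ out of each inner sum. For positive definiteness, if $\|x\|=0$ then taking the singleton segment $F=\{s\}$ in the supremum gives $|x_s|\leq\|x\|=0$ for every $s$, hence $x=0$. The triangle inequality is the only slightly substantive norm check: for any finite family of pairwise disjoint segments $\{F_i\}_{i=1}^m$, I would view the quantities $a_i=\sum_{s\in F_i}x_s$, $b_i=\sum_{s\in F_i}y_s$ as real $m$-vectors and invoke Minkowski's inequality in $\ell^2$ to bound
$$\Bigl(\sum_{i=1}^m|a_i+b_i|^2\Bigr)^{1/2}\leq\Bigl(\sum_{i=1}^m|a_i|^2\Bigr)^{1/2}+\Bigl(\sum_{i=1}^m|b_i|^2\Bigr)^{1/2}\leq\|x\|+\|y\|,$$
and then take the supremum on the left.

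For completeness, I would let $\{x^{(n)}\}_{n\in\mathbb{N}}$ be Cauchy in $\mathcal{JT}$. The singleton-segment trick again gives $|x^{(n)}_s-x^{(m)}_s|\leq\|x^{(n)}-x^{(m)}\|$ for every node $s$, so each coordinate sequence is Cauchy in $\mathbb{R}$ and defines a candidate limit $x_s=\lim_n x^{(n)}_s$. The remaining work is to check $x\in\mathcal{JT}$ and $x^{(n)}\to x$ in norm. For membership, given any finite family of pairwise disjoint segments $\{F_i\}_{i=1}^m$, the sum $\sum_{i=1}^m|\sum_{s\in F_i}x_s|^2$ is a pointwise (finitely many terms) limit of the corresponding sums for $x^{(n)}$, hence bounded by $\sup_n\|x^{(n)}\|^2$, which is finite because Cauchy sequences are bounded; taking the supremum over families gives $\|x\|<\infty$.

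The norm convergence is the step I expect to be mildly subtle because the supremum in the definition is taken over an uncountable family of configurations, but the standard trick applies: fix $\epsilon>0$ and $N$ with $\|x^{(n)}-x^{(k)}\|<\epsilon$ for $n,k\geq N$. For any fixed finite family $\{F_i\}_{i=1}^m$ of pairwise disjoint segments and any $n\geq N$, the inequality $\sum_{i=1}^m|\sum_{s\in F_i}(x^{(n)}_s-x^{(k)}_s)|^2<\epsilon^2$ holds for all $k\geq N$; sending $k\to\infty$ (a finite sum, so pointwise limits suffice) yields $\sum_{i=1}^m|\sum_{s\in F_i}(x^{(n)}_s-x_s)|^2\leq\epsilon^2$. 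Taking the supremum over all such families gives $\|x^{(n)}-x\|\leq\epsilon$ for $n\geq N$, which is what is required. No genuine obstacle arises, since the only place one needs care is to freeze the finite family before passing to the limit in $k$.
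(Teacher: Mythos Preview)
Your proof is correct and follows essentially the same route as the paper: Minkowski's inequality in $\ell^2$ for the triangle inequality, the singleton-segment trick for coordinate-wise convergence, and the ``freeze the finite family, pass $k\to\infty$, then take the supremum'' argument for completeness. The only cosmetic difference is that you verify $x\in\mathcal{JT}$ separately via the boundedness of Cauchy sequences, whereas the paper extracts it from the same $\epsilon$-estimate by observing $x^{(N)}-x\in\mathcal{JT}$; both are standard and equivalent.
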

\begin{proof}
 We first show that $||\cdot||$ is a norm. The only non-trivial part is triangular inequality. Consider $x,y\in\mathcal{JT}$ and 
 $\left\{F_i\right\}_{i=1}^m$  pairwise disjoint segments. Then Minkowski's inequality for sums implies
 \begin{align*}
 (\sum_{i=1}^m|\sum_{s\in F_i}(x_s+y_s)|^2)^{1/2}&=[(\sum_{s\in F_1}x_s+\sum_{s\in F_1}y_s)^2+\\
 &\hspace{0.5cm}+...+(\sum_{s\in F_m}x_s+\sum_{s\in F_m}y_s)^2]^{1/2}\\
 &\leq(\sum_{i=1}^m|\sum_{s\in F_i}x_s|^2)^{1/2}+
 (\sum_{i=1}^m|\sum_{s\in F_i}y_s|^2)^{1/2}\\
 &\leq ||x||+||y||,
 \end{align*}
 so, taking supremum, since the intervals chosen are arbitrary, we obtain
 $$||x+y||\leq ||x||+||y||.$$
  Let us now show completeness.
Let $\left\{x_n\right\}_{n\in\mathbb{N}}$ be a Cauchy sequence in $\mathcal{JT}$. Then for any 
$\epsilon>0$, there is $N$ such that for any $k>n\geq N$, there holds $||x_k-x_n||<\epsilon$.
Considering $s\in 2^{<\mathbb{N}}$ and the interval $I_s=\left\{s\right\}$, the definition of the norm implies
that $$|x_{k,s}-x_{n,s}|<\epsilon,\quad\forall k>n>N.$$ Therefore, the sequence
$\left\{x_{n,s}\right\}_{n\in\mathbb{N}}$ converges for any $s\in 2^{<\mathbb{N}}$. Let us define the map
$x:2^{<\mathbb{N}}\rightarrow\mathbb{R}$, given by
$$x_s=\lim_{n\to\infty} x_{n,s},\quad s\in 2^{<\mathbb{N}}.$$
We will show $x\in\mathcal{JT}$ and that $\displaystyle x=\lim_{n\to\infty} x_n$.
Indeed, consider $\epsilon>0$ and $M\in\mathbb{N}$ such that for any $k>n\geq M$, we have
$||x_n-x_k||<\displaystyle\frac{\epsilon}{2}$. Then for any  family of pairwise disjoint segments
 $\left\{F_i\right\}_{i=1}^m$, we obtain
\begin{eqnarray}\label{10}
(\sum_{i=1}^m|\sum_{s\in F_i}|x_{m,s}-x_{n,s})|^2)^{1/2}&<&\frac{\epsilon}{2}
,\quad\forall m>n\geq M\overset{m\to\infty}{\Rightarrow}\\
\label{11}
(\sum_{i=1}^m|\sum_{s\in F_i}|x_{n,s}-x_s||^2)^{1/2}&\leq&\frac{\epsilon}{2}<\epsilon
,\quad\forall n\geq M.
\end{eqnarray}
Fixing $n=M$ in \eqref{10}, we get that $x_M-x\in\mathcal{JT}\Rightarrow x\in\mathcal{JT}$. Then \eqref{11} yields $$ x=\lim_{n\to\infty} x_n.$$
\end{proof}

The following Lemma illustrates an interesting super-additive property of the norm which will yield that $\{e_n\}_{n\in\mathbb{N}}$ is a Schauder basis.
 \begin{lemma}\label{lemma estimates}
 Let $x\in\mathcal{JT}$. Consider $k\in\mathbb{N}$  and $s_k=\sum_{i=1}^k x_i(t_i)e_i$. Then the following estimate holds:
 \begin{equation*}
 \|s_k\|^2+\|x-s_k\|^2\leq \|x\|^2.
 \end{equation*}
\end{lemma}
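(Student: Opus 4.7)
The plan is to exploit the specific enumeration $\{t_n\}_{n\in\mathbb{N}}$ of $2^{<\mathbb{N}}$. The key structural observation I would first establish is that the set $A_k:=\{t_1,\ldots,t_k\}$ is \emph{downward closed} under $\sqsubseteq$: whenever $t\in A_k$ and $s\sqsubseteq t$, then $s\in A_k$. This will follow from the identity $|t_n|=[\log_2 n]$ already recorded in the excerpt, which forces $\{t_1,\ldots,t_{2^m-1}\}$ to be exactly the set of all nodes of level strictly less than $m$. Hence any initial block of the enumeration consists of all nodes up to some level plus a partial collection of the next level, and every predecessor of a node in $A_k$ must already sit in $A_k$.

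From downward closedness I then extract the geometric fact that drives the proof: for any segment $F=\{u_1\sqsubset \cdots\sqsubset u_p\}$, the intersection $F\cap A_k$ is a contiguous initial sub-chain of $F$, while $F\setminus A_k$ is a contiguous final sub-chain. Both are therefore themselves segments (possibly empty).

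With this in hand, the argument is bookkeeping. Given $\epsilon>0$, choose pairwise disjoint segments $\{F_i\}_{i=1}^m$ and $\{G_j\}_{j=1}^n$ with
\[\sum_{i=1}^m\Big|\sum_{s\in F_i} s_k(s)\Big|^2\ge \|s_k\|^2-\epsilon,\qquad \sum_{j=1}^n\Big|\sum_{s\in G_j}(x-s_k)(s)\Big|^2\ge \|x-s_k\|^2-\epsilon.\]
Replace $F_i$ by $F_i':=F_i\cap A_k$ and $G_j$ by $G_j':=G_j\setminus A_k$, discarding empties. Because $\supp(s_k)\subseteq A_k$ and $\supp(x-s_k)\subseteq A_k^c$, each partial sum is unchanged by the replacement, so both near-optimality estimates persist. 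On $F_i'$ we have $s_k=x$, and on $G_j'$ we have $x-s_k=x$.

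Finally, the combined family $\{F_i'\}\cup\{G_j'\}$ consists of pairwise disjoint segments: the $F_i'$ sit in $A_k$, the $G_j'$ in $A_k^c$, and within each class disjointness was inherited. Testing $\|x\|^2$ against this combined family yields
\[\|s_k\|^2+\|x-s_k\|^2-2\epsilon\;\le\;\sum_{i=1}^m\Big|\sum_{s\in F_i'} x(s)\Big|^2+\sum_{j=1}^n\Big|\sum_{s\in G_j'} x(s)\Big|^2\;\le\;\|x\|^2,\]
and letting $\epsilon\to 0$ gives the inequality. The one delicate point is verifying downward closedness of $A_k$ from the explicit enumeration; once that is in place, the rest is a clean splitting argument.
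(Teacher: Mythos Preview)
Your argument is correct and follows the same splitting strategy as the paper: restrict the test segments for $s_k$ to $A_k=\{t_1,\dots,t_k\}$ and those for $x-s_k$ to its complement, then merge the two disjoint families to bound by $\|x\|^2$. The paper simply asserts that the suprema may be taken over segments lying inside $A_k$ and $A_k^c$ respectively, whereas you supply the justification the paper omits---namely that $A_k$ is downward closed, so intersecting a segment with $A_k$ (or with $A_k^c$) again yields a segment.
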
 
 \begin{proof}
Fix $k\in\mathbb{N}$. Then it is clear that 
\begin{equation}\label{estimates}
\begin{aligned}
\|s_k\|^2&=\sup\left\{\sum_{i=1}^m|\sum_{s\in F_i}x_s|^2\right\},\\
\|x-s_k\|^2&=\sup\left\{\sum_{j=1}^l|\sum_{s\in F_j'}x_s|^2\right\},
\end{aligned}
\end{equation}
where the supremums are taken over all finite families of pairwise disjoint segments $\{F_i\}_{i=1}^m$, $\{F_j\}_{j=1}^l$ contained in $\{t_1,...,t_k\}$ and $\{t_{n}:n\geq k+1\}$ respectively. Considering such families, we define the family of disjoint intervals $\mathcal{F}=\{F_1,...,F_m,F_1',...,F_l'\}$. Then we clearly have
$$\sum_{i=1}^m|\sum_{s\in F_i}x_s|^2+\sum_{j=1}^l|\sum_{s\in F_j'}x_s|^2=\sum_{F\in\mathcal{F}}|\sum_{s\in F}x_s|^2\leq\|x\|^2.$$
Since the families are chosen arbitralily, we may take supremums and obtain the required estimate from \eqref{estimates}.
 \end{proof}
 \begin{proposition}
 The sequence  $\left\{e_n\right\}_{n\in\mathbb{N}}$ 
 is monotone and unitary Schauder basis of $\mathcal{JT}$.
 \end{proposition}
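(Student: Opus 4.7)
The plan is to verify the three required properties—each element has unit norm, the partial sums are monotone in norm, and every element of $\mathcal{JT}$ is approximated by its partial sums—using the super-additive estimate of Lemma \ref{lemma estimates} as the main tool.

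First I would show that $\|e_n\|=1$ for every $n$. The lower bound comes from evaluating the norm on the singleton segment $F=\{t_n\}$, which gives $|e_n(t_n)|^2=1$. For the upper bound, any finite family of pairwise disjoint segments can contain at most one segment meeting $\supp(e_n)=\{t_n\}$, and on such a segment the contribution is again $1$. Hence $\|e_n\|=1$.

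Next I would establish monotonicity using the remark after Proposition \ref{equiv schauder}, namely by checking that
\[
\Big\|\sum_{i=1}^{n}\lambda_i e_i\Big\|\leq \Big\|\sum_{i=1}^{n+1}\lambda_i e_i\Big\|
\]
for every $n$ and every choice of scalars. This is immediate from Lemma \ref{lemma estimates}: apply it to $x=\sum_{i=1}^{n+1}\lambda_i e_i\in\mathcal{JT}$, noting that then $s_n=\sum_{i=1}^{n}\lambda_i e_i$ in the notation of the lemma. Dropping the nonnegative term $\|x-s_n\|^2$ in the inequality $\|s_n\|^2+\|x-s_n\|^2\leq\|x\|^2$ yields the desired bound.

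The main step—and the only one with any real content—is to prove that $[e_n:n\in\mathbb{N}]=\mathcal{JT}$, i.e.\ that $s_k\to x$ in norm for every $x\in\mathcal{JT}$. The super-additivity inequality already gives $\|x-s_k\|^2\leq \|x\|^2-\|s_k\|^2$, so it suffices to show $\|s_k\|\to\|x\|$. Here I would use a tail-truncation argument: fix $\epsilon>0$ and pick a finite family $\{F_i\}_{i=1}^{m}$ of pairwise disjoint segments with
\[
\sum_{i=1}^{m}\Big|\sum_{s\in F_i}x_s\Big|^{2}>\|x\|^{2}-\epsilon^{2}.
\]
Since $\bigcup_{i=1}^{m}F_i$ is a finite subset of $2^{<\mathbb{N}}$, the enumeration $\{t_n\}_{n\in\mathbb{N}}$ reaches it after finitely many steps: there exists $K$ such that $\bigcup_{i=1}^{m}F_i\subseteq\{t_1,\ldots,t_K\}$. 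For any $k\geq K$ the same family is admissible in the supremum defining $\|s_k\|$, so $\|s_k\|^{2}>\|x\|^{2}-\epsilon^{2}$. Combined with Lemma \ref{lemma estimates} this gives $\|x-s_k\|<\epsilon$ for all $k\geq K$, hence $s_k\to x$. Uniqueness of the expansion then follows from the biorthogonality $e_n^{*}(e_m)=\delta_{nm}$, which is visible directly from the definition of the partial sums. Combining the three steps and invoking Proposition \ref{equiv schauder} completes the proof. The slightly delicate point is the passage from the finite family of segments to an index $K$ in the enumeration; once one remembers that every segment is finite this is routine, but it is where the choice of enumeration enters.
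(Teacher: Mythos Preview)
Your proof is correct and follows essentially the same route as the paper: the same near-optimal family of segments together with Lemma \ref{lemma estimates} gives $\|x-s_k\|^2\le\|x\|^2-\|s_k\|^2<\epsilon^2$ for large $k$, and the monotonicity is handled by the same super-additive estimate (the paper verifies that step directly from the definition rather than citing the lemma, but the content is identical). Your use of $\le$ from Lemma \ref{lemma estimates} is in fact cleaner than the paper's displayed equality at that point.
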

 \begin{proof}
 Clearly $\quad e_n\neq 0\quad\forall n\in\mathbb{N}$ and $||e_n||=1\quad\forall n\in\mathbb{N}$. We first show that
 $\mathcal{JT}=[e_n:n\in\mathbb{N}]$. Consider $x\in\mathcal{JT}$. Let us define $s_n=\displaystyle\sum_{i=1}^n x(t_i)e_i$. We will show that $x=\displaystyle\lim_{n\to\infty} s_n.$ Indeed, let $\epsilon>0$. The definition of the norm yiels that there are
 pairwise disjoint segments $\left\{F_i\right\}_{i=1}^m$ 
such that,
\begin{equation}\label{11}
\sum_{i=1}^m|\sum_{s\in F_i}x_s|^2>||x||^2-\epsilon^2.
\end{equation}
For $n_0=\max\left\{|s|:s\in\displaystyle\bigcup_{i=1}^m F_i\right\}$, inequality
 \eqref{11} yields that
\begin{equation*}
||x||^2-||s_n||^2<\epsilon^2\quad\forall n\geq 2^{n_0+1}.
\end{equation*}
So for $n\geq 2^{n_0+1}$, Lemma \ref{lemma estimates} implies that $$||x-s_n||^2=||x-s_n||^2+||s_n||^2-||s_n||^2=
||x||^2-||s_n||^2<\epsilon^2.$$
Therefore, $x=\displaystyle\lim_{n\to\infty} s_n.$
Finally for $n\in\mathbb{N}$ and $\lambda_1,...,\lambda_n,\alpha_{n+1}\in\mathbb{R}$,
we consider pairwise disjoint segments $\left\{F_i\right\}_{i=1}^m$ with $t_{n+1}\notin\displaystyle\bigcup_{i=1}^m F_i$.
Then, defining $y=\sum_{i=1}^n\lambda_i e_i$, we obtain
\begin{eqnarray*}
(\sum_{i=1}^m|\sum_{s\in F_i}y_s|^2))^{1/2}\leq 
||\sum_{i=1}^{n+1} \lambda_ie_i||\Rightarrow
||\sum_{i=1}^n\lambda_ie_i||\leq ||\sum_{i=1}^{n+1} \lambda_ie_i||,
\end{eqnarray*}
and the claim is proved.
\end{proof}

\begin{remark}
 $\mathcal{JT}$ can be equivalently defined as the completion of $<c_{00}(2^{<\mathbb{N}})>$ under the norm defined.
\end{remark}
\begin{remark}\label{intervals}
If $F$ is a segment, we define $F^*=\sum_{s\in F} e_s^*$. Clearly $F\in\mathcal{JT}^*$ since $e_s\in\mathcal{JT}^*,\quad\forall s\in 2^{<\mathbb{N}}$.

Consider now a branch $B$. The definition of the norm implies that for any $x\in\mathcal{JT}$ and $\epsilon>0$, there is $n_0\in\mathbb{N}$ such that for any $m>n>n_0$, there holds $$|\sum_{i=n+1}^m e_i^*(x)|< \epsilon.$$ 
Therefore, the series $\sum_{s\in B}e_s^*(x)$ converges for any $x\in\mathcal{JT}$. Defining $B^*:\mathcal{JT}\rightarrow\mathbb{R}$ by $$B^*(x)=\displaystyle\sum_{s\in B}e_s^*(x),\quad\forall x\in\mathcal{JT},$$
 we obtain that $B^*$ is linear and, by Banach-Steinhauss Theorem, we get $B^*\in\mathcal{JT}^*$. It is clear that $$B^* \overset{w^*}{=}\displaystyle\sum_{s\in B}e_s^*.$$
 
 Finally it is clear that $||I^*||=1$, for any interval $I$, and that for any $x\in\mathcal{JT}$, we have the following norm description:
\begin{equation*}
||x||=\displaystyle\sup\left\{\sum_{i=1}^m |I_i^*(x)|^2\right\}^{1/2},
\end{equation*}
where $\left\{I_i\right\}_{i=1}^m$ are pairwise disjoint intervals, which can be either segments or branches.
\end{remark}

Let us introduce some notation. Recall that for $\sigma\in 2^{\mathbb{N}}$, we denote $\sigma|_n=(\sigma_1,...,\sigma_n)\in 2^{<\mathbb{N}}$. We define
$$\sigma^*\overset{w^*}{=}\sum_{n=1^\infty}e_{\sigma|_n}^*\in\mathcal{JT}^*,\quad\text{by Remark \ref{intervals}}.$$
We can now easily see that the conjugate space is non-separable.
\begin{proposition}
The conjugate space $\mathcal{JT^*}$ is non-separable.
\end{proposition}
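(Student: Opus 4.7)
The plan is to exhibit an uncountable subset of $\mathcal{JT}^*$ that is uniformly separated, which will immediately preclude the existence of a countable dense subset. The natural candidate, already introduced just before the statement, is the family $\{\sigma^* : \sigma\in 2^{\mathbb{N}}\}$. This set has the cardinality of the continuum, so it suffices to show that distinct $\sigma^*$ and $\tau^*$ are at a definite positive distance in $\mathcal{JT}^*$.

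Fix $\sigma\neq\tau$ in $2^{\mathbb{N}}$ and let $N\in\mathbb{N}$ be the smallest index with $\sigma_N\neq\tau_N$ (i.e.\ the separation level of $\sigma,\tau$). I will test the functional $\sigma^*-\tau^*$ against the basis vector $e_{\sigma|_N}\in\mathcal{JT}$, which has norm $1$. Since $\sigma^*$ is the $w^*$-sum $\sum_{n\geq 1} e_{\sigma|_n}^*$, applying it to $e_{\sigma|_N}$ picks out exactly the $n=N$ term and yields $\sigma^*(e_{\sigma|_N})=1$. On the other hand, $\tau^*(e_{\sigma|_N})=\sum_{n\geq 1} e_{\tau|_n}^*(e_{\sigma|_N})=0$: for $n<N$ the nodes $\tau|_n$ and $\sigma|_N$ lie at different levels, and for $n\geq N$ they disagree at coordinate $N$ by the choice of $N$. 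Hence
\[
\|\sigma^*-\tau^*\|\geq |(\sigma^*-\tau^*)(e_{\sigma|_N})|=1.
\]

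Consequently the open balls $B(\sigma^*,1/2)\subseteq\mathcal{JT}^*$, indexed over $\sigma\in 2^{\mathbb{N}}$, are pairwise disjoint. Any dense subset of $\mathcal{JT}^*$ must intersect each of these uncountably many disjoint balls, hence cannot be countable, and $\mathcal{JT}^*$ is non-separable.

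The main point, and essentially the only thing that requires care, is the identification of the separation level and the verification that $\tau^*(e_{\sigma|_N})=0$; both reduce to the elementary combinatorics of the Cantor tree already recorded in the preliminaries. Everything else is routine: the norm-one bound on $e_{\sigma|_N}$ is immediate from the definition of $\|\cdot\|$, and the passage from a $1$-separated uncountable family to non-separability is standard.
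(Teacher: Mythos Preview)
Your proof is correct and follows essentially the same route as the paper's: both exhibit the uncountable family $\{\sigma^*:\sigma\in 2^{\mathbb{N}}\}$ and show it is $1$-separated by evaluating $\sigma^*-\tau^*$ on a suitable basis vector $e_s$ with $\sigma^*(e_s)=1$ and $\tau^*(e_s)=0$. You have simply made explicit the choice $s=\sigma|_N$ at the separation level and spelled out the combinatorics that the paper leaves implicit.
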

\begin{proof}
The set $\left\{\sigma^* :\sigma\in 2^{\mathbb{N}}\right\}$ is clearly 
uncountable and $1$-separated i.e. there are $x^*,y^*\in\mathcal{JT}^*$ with
$$\|x^*-y^*\|\geq 1.$$
 Indeed, considering $\sigma_1\neq\sigma_2\in 2^{\mathbb{N}}$, there is $s\in 2^{<\mathbb{N}}$ with
$\sigma_1^*(e_s)=1$ and $\sigma_2^*(e_s)=0$. Thus $||\sigma_1^*-\sigma_2^*||\geq 1$,
so $\mathcal{JT^*}$ is not separable.
\end{proof}

\section{ The $\ell^1$ non-embedding in $\mathcal{JT}$}
 The goal of this section is to show that $\ell^1$ does not embed in $\mathcal{JT}$. As mentioned before, we will use Riesz's Representation Theorem to prove the non-embedding of $\ell^1$.

Let $\left\{I_n\right\}_{n\in\mathbb{N}}$ be pairwise disjoint intervals. For any $x\in\mathcal{JT}$ we have that $$\sum_{n=1}^\infty|I_n^*(x)^2|\leq ||x||^2.$$
Moreover, for any sequence $\left\{\alpha_n\right\}_{n\in\mathbb{N}}\in B_{\ell^2}$, Cauchy-Schwartz inequality implies that 
$$\sum_{n=1}^\infty|\alpha_n||I_n^*(x)|\leq ||x||,$$ 
so $\sum_{n=1}^\infty\alpha_nI_n^*(x)$ converges absolutely for any $x\in\mathcal{JT}$ and $w^*-\sum_{n=1}^\infty\alpha_nI_n^*\in B_{\mathcal{JT^*}}$. 
Let us note that by $w^*-\sum_{n=1}^\infty\alpha_nI_n^*$ we mean the series interpreted as a $w^*$-limit of partial sums.

Let us define  
\begin{equation*}
K^*=\left\{w^*-\displaystyle\sum_{n=1}^\infty\alpha_nI_n^*:\left\{\alpha_n\right\}_{n\in\mathbb{N}}\in B_{\ell^2}\mbox{ and } \left\{I_n\right\}_{n\in\mathbb{N}}\mbox{ pairwise disjoint intervals}\right\}.
\end{equation*}
Clearly $K^*\subseteq B_{\mathcal{JT^*}}$.
\begin{proposition}\label{norming}
 The set $K^*$ is norming for $\mathcal{JT}$ i.e. $$||x||=\sup\left\{k^*(x):k^*\in K^*\right\}, \quad\forall x\in\mathcal{JT}.$$ We may also write $||x||=\sup\left\{|k^*(x)|:k^*\in K^*\right\}$.
\end{proposition}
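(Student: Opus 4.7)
The plan is to establish the two-sided inequality $\|x\| = \sup\{k^*(x) : k^* \in K^*\}$ by bounding each side against the other, with both directions hinging on the interval norm description from Remark \ref{intervals}, namely $\|x\|^2 = \sup \sum_{i=1}^m |I_i^*(x)|^2$ over finite families of pairwise disjoint intervals (segments or branches).

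For the upper bound $\sup\{k^*(x) : k^* \in K^*\} \leq \|x\|$, I would fix any $k^* = w^*\text{-}\sum_{n=1}^\infty \alpha_n I_n^* \in K^*$ and apply Cauchy--Schwarz to the absolutely convergent series $k^*(x) = \sum_n \alpha_n I_n^*(x)$, yielding
\[
|k^*(x)| \leq \Bigl(\sum_{n=1}^\infty \alpha_n^2\Bigr)^{1/2}\Bigl(\sum_{n=1}^\infty I_n^*(x)^2\Bigr)^{1/2}.
\]
The first factor is at most $1$ because $\{\alpha_n\} \in B_{\ell^2}$; the second is at most $\|x\|$ because, for every finite truncation, the pairwise disjoint family $I_1,\dots,I_m$ is admissible in the norm description, and letting $m\to\infty$ transfers the bound to the full sum.

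For the reverse inequality, I would argue by $\varepsilon$-approximation. Given $\varepsilon > 0$, choose a finite pairwise disjoint family of intervals $I_1, \dots, I_m$ with $\sum_{i=1}^m I_i^*(x)^2 > \|x\|^2 - \varepsilon^2$ and set $M = \bigl(\sum_{i=1}^m I_i^*(x)^2\bigr)^{1/2}$ (if $M=0$ then $\|x\| < \varepsilon$ and the claim is trivial). Defining $\alpha_i = I_i^*(x)/M$ for $i \leq m$, I get $\sum_{i=1}^m \alpha_i^2 = 1$. To match the definition of $K^*$, which demands an infinite sequence of pairwise disjoint intervals, I would extend the family by any countable sequence of singleton segments $\{s\}$ with $s \notin \bigcup_{i\leq m} I_i$ (such $s$ exist in abundance since $2^{<\mathbb{N}}$ is infinite) and set $\alpha_n = 0$ for $n > m$. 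The resulting functional $k^* \in K^*$ satisfies
\[
k^*(x) = \sum_{i=1}^m \frac{I_i^*(x)^2}{M} = M > \bigl(\|x\|^2 - \varepsilon^2\bigr)^{1/2},
\]
and letting $\varepsilon \to 0$ gives the lower bound. The version with absolute values then follows immediately because $\{\alpha_n\} \in B_{\ell^2}$ iff $\{-\alpha_n\} \in B_{\ell^2}$, so $K^*$ is symmetric and $\sup k^*(x) = \sup |k^*(x)|$. I do not anticipate a genuine obstacle here; the only mildly delicate point is ensuring the extension to an infinite pairwise disjoint family so that the constructed $k^*$ literally lies in $K^*$ as defined, which is handled by padding with disjoint singletons.
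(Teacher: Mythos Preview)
Your proof is correct and follows essentially the same approach as the paper: the upper bound comes from $K^*\subseteq B_{\mathcal{JT}^*}$ (which the paper established just before the proposition via the same Cauchy--Schwarz computation you spell out), and the lower bound is obtained by choosing a near-optimal finite family of disjoint intervals and normalizing the values $I_i^*(x)$ to produce an element of $K^*$. The only cosmetic difference is that the paper first truncates $x$ to a finitely supported $x_n$ and then lets $n\to\infty$, whereas you work directly with $x$ via the interval norm description of Remark~\ref{intervals}; your explicit padding with disjoint singletons to land in $K^*$ is a detail the paper leaves implicit.
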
 

\begin{proof}
Let $x\in\mathcal{JT}$. Since $K^*\subseteq B_{\mathcal{JT^*}}$, we have that
$$\sup\left\{k^*(x):k^*\in K^*\right\}\leq||x||.$$ For the opposite direction, consider $n\in\mathbb{N}$
and define $x_n=\displaystyle\sum_{i=1}^ne_i^*(x)e_i$. Then there are pairwise disjoint intervals $\left\{I_i\right\}_{i=1}^m$
 such that $$||x_n||-\frac{1}{n}<\displaystyle(\sum_{i=1}^m|I_i^*(x_n)|^2)^{1/2}.$$ 
 Defining $$\lambda_i=(\sum_{i=1}^m|I_i^*(x_n)|^2)^{-1/2}I_i^*(x_n),$$
we have that $k^*=\displaystyle\sum_{i=1}^m\lambda_iI_i^*\in K^*$ and $k^*(x_n)=(\displaystyle\sum_{i=1}^m|I_i^*(x_n)|^2)^{1/2}$,
so $$||x_n||-\frac{1}{n}<k^*(x_n)\leq\sup\left\{k^*(x_n):k^*\in K^*\right\}\overset{n\to\infty}{\Rightarrow} ||x||\leq\sup\left\{k^*(x):k^*\in K^*\right\},$$ and the result follows. The second description is immediate.
\end{proof}
\begin{proposition}
The set $K^*$ is  $w^*$-compact subset of $\mathcal{JT^*}$.
\end{proposition}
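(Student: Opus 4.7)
The plan is to exploit the fact that $\mathcal{JT}$ is separable (it admits the Schauder basis $\{e_n\}_{n\in\mathbb{N}}$), so by the Banach--Alaoglu Theorem the unit ball $B_{\mathcal{JT}^*}$ is $w^*$-compact and metrizable in the $w^*$-topology. Since $K^* \subseteq B_{\mathcal{JT}^*}$, it suffices to prove that $K^*$ is $w^*$-sequentially closed. Take any sequence $\{k_n^*\} \subseteq K^*$ with $k_n^* \xrightarrow{w^*} k^*$, and write $k_n^* = w^*\text{-}\sum_{i=1}^\infty \alpha_{n,i}\, I_{n,i}^*$ with $(\alpha_{n,i})_i \in B_{\ell^2}$ and pairwise disjoint intervals $\{I_{n,i}\}_i$.

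The key structural observation is that by disjointness $k_n^*(e_s) = \alpha_{n,i_n(s)}$ if $s$ lies in some (then unique) interval $I_{n,i_n(s)}$, and $k_n^*(e_s) = 0$ otherwise; moreover $w^*$-convergence guarantees pointwise convergence $k_n^*(e_s) \to k^*(e_s)$ on the countable set $\{e_s\}_{s \in 2^{<\mathbb{N}}}$. I would then perform a diagonal extraction that stabilizes two kinds of combinatorial data: first, for each node $s$, whether $s \in \bigcup_i I_{n,i}$ holds for all large $n$ (call the set of such $s$ the set $A$); second, for each pair $(s,t)$ of nodes in $A$, whether $i_n(s) = i_n(t)$ holds for all large $n$. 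This defines an equivalence relation $\sim$ on $A$ with at most countably many classes $\{J_k\}_k$.

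Next I would verify that each $J_k$ is an interval of the Cantor tree: comparability of $s,t \in J_k$ follows from their eventually lying in the same interval $I_{n,i_n(s)}$, and convexity follows from the convexity of each $I_{n,i_n(s)}$. Hence $J_k$ is a segment or a branch and $J_k^* \in \mathcal{JT}^*$ by Remark~\ref{intervals}. For $s \in J_k$, the value $\beta_k := \lim_n \alpha_{n,i_n(s)} = k^*(e_s)$ is well-defined and independent of the representative. Choosing representatives $s_1,\dots,s_N$ from distinct classes $J_{k_1},\dots,J_{k_N}$, the indices $i_n(s_j)$ are eventually pairwise distinct, so $\sum_{j=1}^N \beta_{k_j}^2 \leq \limsup_n \sum_{j=1}^N \alpha_{n,i_n(s_j)}^2 \leq 1$; letting $N \to \infty$ gives $(\beta_k) \in B_{\ell^2}$.

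Setting $\tilde k^* := w^*\text{-}\sum_k \beta_k J_k^* \in K^*$, by construction $\tilde k^*(e_s) = k^*(e_s)$ for every $s \in 2^{<\mathbb{N}}$ (the cases $s \in J_k$ and $s \notin A$ both check out immediately), and continuity together with the density of the span of $\{e_s\}_{s \in 2^{<\mathbb{N}}}$ in $\mathcal{JT}$ forces $\tilde k^* = k^*$, so $k^* \in K^*$. The main obstacle is the bookkeeping in the diagonal extraction, making sure that both the ``membership'' and ``co-membership'' data stabilize jointly across all countably many nodes and pairs; once this is arranged, checking that each equivalence class is both a chain and convex in the Cantor tree is essentially automatic from the interval structure of each $I_{n,i}$, and the $\ell^2$-bound then follows by standard truncation.
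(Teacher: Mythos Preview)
Your argument is correct but proceeds along a genuinely different route from the paper's. The paper shows $w^*$-\emph{sequential compactness} directly: given $k_n^*=w^*\text{-}\sum_i \alpha_{i,n} I_{i,n}^*$, it first re-orders each sum so that $|\alpha_{i,n}|$ is nonincreasing in $i$, proves a lemma that any sequence of intervals admits a subsequence with $I_{k_n}^*\xrightarrow{w^*}I^*$ for some interval $I$, and then runs a diagonal extraction \emph{on the parameters} to obtain limits $\alpha_i=\lim_n\alpha_{i,n}$ and $I_i^*=w^*\text{-}\lim_n I_{i,n}^*$; the decreasing ordering is what makes the final $\epsilon$-tail estimate (controlling $\sum_{i>N}\alpha_{i,n} I_{i,n}^*(e_s)$ uniformly in $n$) go through.

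You instead prove $w^*$-sequential \emph{closedness} (equivalent here since $B_{\mathcal{JT}^*}$ is compact metrizable) by working \emph{node-wise}: you stabilize, for each $s$ and each pair $(s,t)$, the membership and co-membership data, and then reconstruct the limiting intervals as equivalence classes $J_k$ of the relation ``eventually in the same interval''. This sidesteps both the re-ordering trick and the tail estimate: once the combinatorics are stabilized, the identification $\tilde k^*(e_s)=k^*(e_s)$ is immediate and no quantitative control of tails is needed. The price is the heavier bookkeeping in the diagonal extraction over nodes and pairs, and the verification that each equivalence class is indeed an interval; but these are routine, and your sketch handles them correctly (in particular, convexity of $J_k$ does force $w\in A$ and $w\sim s$, since $w$ lies in $I_{n,i_n(s)}$ whenever $s,t$ do). Either approach is fine; yours is arguably the more intrinsic one, while the paper's is closer to a ``coordinates-then-limit'' argument.
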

\begin{proof}
Since $\mathcal{JT}$ is separable, the ball $(B_{X^*},w^*)$ is a metric space, so it suffices to show that $K^*$ is sequentially compact. Let $k_n^*=w^*-\displaystyle\sum_{n=1}^\infty\alpha_{i,n} I_{i,n}^*$ a sequence in $K^*$. Since this series converges absolutely, we may assume, after a possible re-ordering, that for any $n\in\mathbb{N}$, we have that $|\alpha_{i+1,n}|\leq|\alpha_{i,n}|\quad\forall i\in\mathbb{N}$. We first prove a claim.

$\bullet$ \textit{Claim}:
Let $\left\{I_n\right\}_{n\in\mathbb{N}}$ a sequence of intervals. Then there is subsequence
 $\left\{I_{k_n}\right\}_{n\in\mathbb{N}}$ and interval $I$ such that $I_{k_n}^*\overset{w^*}{\longrightarrow} I^*$.
 
\textit{Proof of the claim} Using a diagonal argument, we may find subsequence $\left\{I_{k_n}\right\}_{n\in\mathbb{N}}$  such that $\left\{I_{k_n}^*(e_s)\right\}_{n\in\mathbb{N}}$ converges for any $s\in 2^{<\mathbb{N}}$. Let us define $$I=\left\{s\in 2^{<\mathbb{N}}:\exists n_s\in\mathbb{N}\mbox{ with }s\in I_{k_n}\quad\forall n\geq n_s\right\}.$$ Then $I$ is an interval. Indeed, consider $s,t\in I$. Then there is $n_0\in\mathbb{N}$ such that $s,t\in I_{k_{n_0}}$. Thus, either $s\sqsubseteq t$ or $t\sqsubseteq s$. Let us consider $s,t\in I$ and $w\in 2^{<\mathbb{N}}$ such that $s\sqsubseteq w\sqsubseteq t$. From the definition of $I$, there is $n_0\in\mathbb{N}$ such that $s,t\in I_n,\quad\forall n\geq n_0$. Hence, we have that $w\in I_n,\quad\forall n\geq n_0\Rightarrow w\in I$. We may easily show that $\displaystyle\lim_{n\to\infty}I_{k_n}^*(s)=I^*(s),\quad\forall s\in 2^{<\mathbb{N}}$ and since $||I_n^*||=1,\quad\forall n\in\mathbb{N}$, Corollary \ref{cor 1.1.11}, implies the claim.

\textit{Main proof}
Using a diagonal argument, we may find $M\in[\mathbb{N}]$ , sequence $\left\{\alpha_i\right\}_{i\in\mathbb{N}}\in B_{\ell^2}$ such that $\alpha_{i,n}\overset{n\in M}{\longrightarrow}\alpha_i,\quad\forall i\in\mathbb{N}$ and intervals $\left\{I_i\right\}_{i\in\mathbb{N}}$ such that $I_i=w^*-\displaystyle\lim_{n\in M} I_{i,n}^*$. The intervals $\left\{I_i\right\}_{i\in\mathbb{N}}$ are clearly pairwise disjoint.

 We define
$k^*\overset{w^*}{=}\sum_{n=1}^\infty\alpha_iI_i^*\in K^*$ and we will show that
$$k^*\overset{w^*}{=}\displaystyle\lim_{n\in M}k_n^*.$$ Indeed, consider $s\in 2^{<\mathbb{N}}$ and $\epsilon>0$. We pick $N\in\mathbb{N}$ such that $$(\displaystyle\sum_{i=N+1}^\infty\alpha_i^2)^{1/2}<\frac{\epsilon}{4}.$$ Then there is $n_0\in\mathbb{N}$ such that for any $n\geq n_0$, there holds $$\displaystyle\sum_{i=1}^N|\alpha_{i,n}I_{i,n}^*(e_s)-\alpha_iI_i^*(e_s)|<\frac{\epsilon}{4},$$ and $$\displaystyle |\alpha_{N+1,n}-\alpha_{N+1}|<\frac{\epsilon}{4}.$$
Then for any $n\in M$ with $n\geq n_0$, we have
\begin{align*}
&\hspace{-1cm}|\sum_{i=1}^\infty\alpha_{i,n}I_{i,n}^*(e_s)-\sum_{i=1}^\infty\alpha_iI_i^*(e_s)|\leq\sum_{i=1}^N|\alpha_{i,n}I_{i,n}^*(e_s)-\alpha_iI_i^*(e_s)|\\
&+|\sum_{i=N+1}^\infty\alpha_{i,n}I_{i,n}^*(e_s)|+
|\sum_{i=N+1}^\infty\alpha_iI_i^*(e_s)|\\
&<\frac{\epsilon}{4}+(\sum_{i=N+1}^\infty\alpha_i^2)^{1/2}(\sum_{i=N+1}^\infty|I_i^*(e_s)|^2)^{1/2}\\
&+|\alpha_{j,n}|\quad\mbox{, for some}\quad j\geq N+1\\
&< \frac{\epsilon}{4}+\frac{\epsilon}{4}+|\alpha_{N+1,n}|\\
&\leq \frac{\epsilon}{2}+|\alpha_{N+1,n}-\alpha_{N+1}|+|\alpha_{N+1}|\\
&\leq\frac{\epsilon}{2}+\frac{\epsilon}{4}+\frac{\epsilon}{4}=\epsilon.
\end{align*}
Therefore $k_n^*(e_s)\overset{n\in M}{\longrightarrow}k^*(e_s)\quad\forall s\in 2^{<\mathbb{N}}$ and the result is proved.\end{proof}

We will now use some measure theoretic arguments. Let $(X,\mathcal{A},\mu)$ a signed measure space and $f\in L^1(|\mu|)$, where $|\mu|$ is the total variation of $\mu$. The integral of $f$ with respect to $\mu$ is defined as $$\displaystyle\int_{X}\!f\, d\mu=\displaystyle\int_{X}\!f\, d\mu^+-\displaystyle\int_{X}\!f\, d\mu^-.$$
Dominated Convergence Theorem implies that if we consider a sequence of measurable functions $f_n:(X,\mathcal{A})\rightarrow\mathbb{R}$ with $f=\displaystyle\lim_{n\to\infty} f_n$ a.e., such that there is $g\in L^1(|\mu|)$ with $$|f_n|\leq g\quad\forall n\in\mathbb{N} \text{ a.e. in } X,$$ then
$$\displaystyle\lim_{n\to\infty}\int_{X}\!|f_n-f|\, d\mu=0\Rightarrow
\lim_{n\to\infty}\int_{X}\!f_n\, d\mu=\int_{X}\!f \, d\mu.$$
We will now us a special form of Riesz's Representation Theorem, whose proof can be found in \cite{rudin}. Let $X$ be a topological space. We will write $\mathcal{M}_{f,r}(X)$ for the set of all finite, regular, signed Borel measures of $X$ and $C(X)$ for the Banach space of continuous real functions on $X$, equipped with the $\|\cdot\|_\infty$ norm.
\begin{theorem}(Riesz's Representation Theorem)
Let $X$ be a compact Hausdorff space. Then for any $f^*\in C^*(X)$ there is unique $\mu_{f^*}\in\mathcal{M}_{f,r}(X)$ such that $$f^*(f)=\displaystyle\int_{X}\!f \, d\mu_{f^*},\quad\forall f\in C(X).$$
\end{theorem}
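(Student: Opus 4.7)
My plan is to reduce the general statement to the case of positive linear functionals, then represent each positive functional as integration against a finite regular Borel measure via a Urysohn/outer-measure construction, following the Riesz--Markov--Kakutani line.

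First, I would split an arbitrary $f^* \in C^*(X)$ as a difference of positive functionals. For $f \in C(X)$ with $f \ge 0$, define
$$f^*_+(f) = \sup\{f^*(g) : g \in C(X),\ 0 \le g \le f\},$$
and check additivity on the positive cone using the decomposition $g = g_1 + g_2$ with $0 \le g_i \le f_i$ whenever $0 \le g \le f_1 + f_2$. Extending linearly via $f = f^+ - f^-$ and setting $f^*_- = f^*_+ - f^*$ produces two positive bounded functionals whose difference is $f^*$. It therefore suffices to construct a finite regular Borel measure $\mu$ with $L(f) = \int_X f\, d\mu$ for each positive $L \in C^*(X)$, and independently establish uniqueness.

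Next, given a positive $L$, I would build $\mu$ via an outer measure, exploiting that $X$ is compact Hausdorff and hence normal, so Urysohn's lemma supplies the needed continuous cut-offs. Define
$$\mu^*(U) = \sup\{L(\varphi) : \varphi \in C(X),\ 0 \le \varphi \le 1,\ \supp(\varphi) \subset U\}$$
for open $U$, and $\mu^*(E) = \inf\{\mu^*(U) : E \subset U \text{ open}\}$ in general. I would verify that $\mu^*$ is an outer measure, apply the Carath\'eodory criterion to obtain a $\sigma$-algebra $\mathcal{M}$, and show that every open set lies in $\mathcal{M}$ (the main step, leveraging Urysohn separations), so that all Borel sets are measurable. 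Finiteness comes from $\mu(X) \le \|L\|$, and regularity is built into the definition. To verify $L(f) = \int_X f\, d\mu$, reduce by linearity and scaling to $0 \le f \le 1$, partition $[0,1]$ into $n$ equal subintervals, set $E_j = f^{-1}([(j-1)/n, j/n))$, and sandwich both $L(f)$ and $\int f\, d\mu$ between Riemann-type sums $\frac{1}{n}\sum_j (j-1)\mu(E_j)$ and $\frac{1}{n}\sum_j j\, \mu(E_j)$, approximating the indicator of each level set by Urysohn functions; the discrepancy is $O(\mu(X)/n)$, vanishing as $n \to \infty$.

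Uniqueness follows because a finite regular signed Borel measure is determined by its values on open sets, and $\mu(U)$ itself is recovered as $\sup\{\int \varphi\, d\mu : 0 \le \varphi \le 1,\ \supp(\varphi) \subset U\}$ via monotone convergence through Urysohn functions; hence two measures inducing the same functional must coincide. The main obstacle I anticipate is step two: showing that the Borel $\sigma$-algebra is contained in the Carath\'eodory $\sigma$-algebra while simultaneously confirming that integration against $\mu$ reproduces $L$. These facts are tightly coupled and require careful bookkeeping with Urysohn-style separations to avoid circularity, since the outer measure is defined in terms of continuous functions whose supports control the open sets one is trying to measure. Once that interplay is pinned down, representation and uniqueness follow routinely.
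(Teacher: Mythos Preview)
The paper does not give its own proof of this theorem; it is quoted as a classical result with a reference to Rudin, so there is no in-paper argument to compare against. Your outline is the standard Riesz--Markov--Kakutani construction (Jordan decomposition of the functional, outer measure built from Urysohn functions, Carath\'eodory measurability, sandwiching via level-set approximations), which is precisely the approach in the cited source, and your identification of the Borel-sets-are-Carath\'eodory-measurable step as the delicate point is accurate.
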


\begin{proposition}\label{prop 3.2.4}
Let $\left\{x_n\right\}_{n\in\mathbb{N}}$ be a bounded sequence in $\mathcal{JT}$. If the sequence $\left\{I^*(x_n)\right\}_{n\in\mathbb{N}}$ converges for any interval $I$ then $\left\{x_n\right\}_{n\in\mathbb{N}}$ is $w$-Cauchy.
\end{proposition}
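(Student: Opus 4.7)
The plan is to establish convergence of $x^*(x_n)$ for every $x^* \in \mathcal{JT}^*$, proceeding in two stages: first I will show the convergence holds for every element of the norming, $w^*$-compact set $K^*$; then I will lift this to arbitrary $x^* \in \mathcal{JT}^*$ by embedding $\mathcal{JT}$ isometrically into $C(K^*)$ and invoking Riesz's Representation Theorem.

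For the first stage, fix $k^* = w^*\text{-}\sum_{i=1}^\infty \alpha_i I_i^* \in K^*$ with $\{\alpha_i\}\in B_{\ell^2}$, and let $M = \sup_m\|x_m\|$. The a-priori bound $\sum_i |I_i^*(x_m)|^2 \leq \|x_m\|^2 \leq M^2$ combined with Cauchy--Schwarz yields
$$\left|\sum_{i=N+1}^\infty \alpha_i I_i^*(x_m)\right| \leq M\left(\sum_{i=N+1}^\infty \alpha_i^2\right)^{1/2},$$
which tends to $0$ as $N\to\infty$ \emph{uniformly in $m$}. Hence the partial sums $s_N(m) := \sum_{i=1}^N \alpha_i I_i^*(x_m)$ converge to $k^*(x_m)$ uniformly in $m$. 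For each fixed $N$, $s_N(m)$ is a finite linear combination of sequences $\{I_i^*(x_m)\}_m$ that converge by hypothesis, so $s_N(m)$ converges as $m\to\infty$. A standard $3\epsilon$-argument then shows that $\{k^*(x_m)\}_m$ is Cauchy, hence convergent.

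For the second stage, I define the evaluation map $T: \mathcal{JT} \to C(K^*)$ by $T(x)(k^*) = k^*(x)$. By the preceding proposition $(K^*, w^*)$ is a compact Hausdorff space, and each $T(x)$ is $w^*$-continuous, so $T(x) \in C(K^*)$. Since $K^*$ is symmetric (negating the $\alpha_i$ keeps one inside $K^*$) and norming, Proposition \ref{norming} gives $\|T(x)\|_\infty = \sup_{k^*\in K^*}|k^*(x)| = \|x\|$, making $T$ an isometric embedding. Given any $x^* \in \mathcal{JT}^*$, the functional $x^*\circ T^{-1}$ on $T(\mathcal{JT}) \subseteq C(K^*)$ extends via Hahn--Banach to some $\widetilde{x}^* \in C(K^*)^*$, and Riesz's Representation Theorem produces $\mu \in \mathcal{M}_{f,r}(K^*)$ with
$$x^*(x) \;=\; \widetilde{x}^*(T(x)) \;=\; \int_{K^*} k^*(x) \, d\mu(k^*), \qquad \forall x \in \mathcal{JT}.$$
Since $|k^*(x_n)| \leq M$ uniformly in $n$ and $k^*\in K^*$, and the constant $M$ is $|\mu|$-integrable on the finite-measure space $K^*$, Dominated Convergence combined with the first stage yields
$$\lim_{n\to\infty} x^*(x_n) \;=\; \int_{K^*} \lim_{n\to\infty} k^*(x_n) \, d\mu(k^*),$$
so $\{x^*(x_n)\}_n$ converges and $\{x_n\}_n$ is weakly Cauchy.

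The main conceptual obstacle is recognizing the framework in the second stage: $K^*$ being simultaneously $w^*$-compact Hausdorff and norming allows one to treat $\mathcal{JT}$ as a subspace of $C(K^*)$, so that every functional on $\mathcal{JT}$ is representable as integration against a regular Borel measure supported on $K^*$, which is precisely where the hypothesis (pointwise convergence on intervals) has been upgraded to pointwise convergence. Once this setup is in place, the remainder is routine: an $\ell^2$-tail estimate for the first stage and Dominated Convergence for the second.
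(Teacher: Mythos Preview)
Your proof is correct and follows essentially the same route as the paper: embed $\mathcal{JT}$ isometrically into $C(K^*)$ via the norming property, represent an arbitrary $x^*\in\mathcal{JT}^*$ by a Riesz measure on $K^*$, reduce via Dominated Convergence to pointwise convergence on $K^*$, and verify the latter by a Cauchy--Schwarz tail estimate plus a $3\epsilon$ argument. The only cosmetic differences are that you do the two stages in the opposite order and are more explicit about the Hahn--Banach extension (the paper phrases it as surjectivity of $T^*$).
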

\begin{proof}
Let $M=\displaystyle\sup_{n\in\mathbb{N}}\left\{||x_n||\right\}$. We define $T:\mathcal{JT}\rightarrow C(K)$ by $T(x)=\widehat{x}|_K$. Clearly $T$ is a well-defined linear isometry. Indeed,
$$||T(x)||_\infty=||\widehat{x}|_K||_\infty=\displaystyle\sup\left\{|\widehat{x}(x^*)|:x^*\in K^*\right\}=
\displaystyle\sup\left\{|x^*(x)|:x^*\in K^*\right\}=||x||,$$ 
by Proposition \ref{norming}. Therefore the conjugate map $T^*:C^*(K)\rightarrow\mathcal{JT^*}$ is surjective. So, for any $x^*\in\mathcal{JT^*}$, there is $f^*\in C^*(K)$ such that $x^*=f^*\circ T$. Riesz's Representation Theorem implies that for any $x^*\in\mathcal{JT^*}$, there is unique $\mu_{x^*}\in\mathcal{M}_{f,r}(K)$ such that for any $n\in\mathbb{N}$, we have
$$x^*(x_n)=\displaystyle\int_{K}\!\widehat{x}_n|_K, d\mu_{x^*}.$$ 
Since $||\widehat{x}_n|_K||\leq M$ for any $n\in\mathbb{N}$ and $|\mu_{x^*}|(K)<\infty$ for any $x^*\in\mathcal{JT^*}$, Dominated Convergence Theorem yields it is enough to show that $\left\{x^*(x_n)\right\}_{n\in\mathbb{N}}$ converges for all $x^*\in K^*$. Indeed, let $x^*\overset{w^*}{=}\sum_{i=1}^\infty\lambda_iI_i^*\in K^*$. By assumption we may write $$\alpha_i=\displaystyle\lim_{n\to\infty}I_i^*(x_n).$$ Then
$(\sum_{i=1}^\infty\alpha_i^2)^{1/2}\leq M$ and $\sum_{i=1}^\infty|\lambda_i\alpha_i|\leq M$, by Cauchy-Schwartz inequality. Hence, the series $\sum_{i=1}^\infty\lambda_i\alpha_i$ is absolutely convegent. Consider $\epsilon>0$
and $N\in\mathbb{N}$ such that $$(\displaystyle\sum_{i=N+1}^\infty\lambda_i^2)^{1/2}<\frac{\epsilon}{3M},$$ and $$|\displaystyle\sum_{i=N+1}^\infty\lambda_i\alpha_i|<\frac{\epsilon}{3}.$$ Fixing $n_0\in\mathbb{N}$ such that for any $n\geq n_0$ there holds $$\displaystyle\sum_{i=1}^N|\lambda_iI_i^*(x_n)-\lambda_i\alpha_i|<\frac{\epsilon}{3},\quad\forall n\geq n_0,$$
we obtain
\begin{align*}
|x^*(x_n)-\sum_{i=1}^\infty\lambda_i\alpha_i|&\leq\sum_{i=1}^N|\lambda_iI_i^*(x_n)-\lambda_i\alpha_i|+|\sum_{i=N+1}^\infty\lambda_iI_i^*(x_n)|+|\sum_{i=N+1}^\infty\lambda_i\alpha_i|\\
\hspace{-1cm}&<\frac{\epsilon}{3}+(\sum_{i=N+1}^\infty\lambda_i^2)^{1/2}(\sum_{i=N+1}^\infty |I_i^*(x_n)|^2)^{1/2}+\frac{\epsilon}{3}\\
&<\frac{\epsilon}{3}+\frac{\epsilon}{3}+\frac{\epsilon}{3}=\epsilon.
\end{align*}
The proof is complete.
\end{proof}

We will use the following notation. Given  a countable set $M$, we will write $[M]$ to denote the set of infinite subsets of $M$. Let us first prove the following Lemma.
\begin{lemma}\label{lem fuc}
Let $X\neq\emptyset$ and $f_n:X\rightarrow\mathbb{R}$ sequence of functions such that
$$\displaystyle\sup_{n\in\mathbb{N}}\left\{|f_n(x)|\right\}<\infty\quad\forall x\in X.$$ If for any $\epsilon>0$ and $M\in[\mathbb{N}]$ there is $L\in [M]$ with
$$\displaystyle\limsup_{n\in L}f_n(x)-\displaystyle\liminf_{n\in L}f_n(x)<\epsilon,\quad\forall x\in X,$$  the sequence $\left\{f_n\right\}_{n\in\mathbb{N}}$ has pointwise subsequence.
\end{lemma}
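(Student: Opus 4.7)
The plan is to construct the desired pointwise convergent subsequence via a diagonalization in the spirit of Cantor, where at each stage the oscillation of the subsequence is controlled uniformly in $x$.

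First, I would iteratively apply the hypothesis to produce a nested family of infinite index sets. Starting with $M_0 = \mathbb{N}$, and given $M_k \in [\mathbb{N}]$, apply the assumption with $\epsilon = 1/(k+1)$ and $M = M_k$ to extract $M_{k+1} \in [M_k]$ with
$$\limsup_{n \in M_{k+1}} f_n(x) - \liminf_{n \in M_{k+1}} f_n(x) < \frac{1}{k+1}, \quad \forall x \in X.$$
Because each $M_{k+1}$ is an infinite subset of $M_k$, the family $\{M_k\}_{k \geq 0}$ is decreasing by inclusion.

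Next, I would perform the diagonal step: choose $n_0 \in M_0$ arbitrarily, and recursively pick $n_{k} \in M_{k}$ with $n_k > n_{k-1}$, which is possible since $M_k$ is infinite. Let $L = \{n_k : k \in \mathbb{N}\}$; by construction, for every fixed $K$ the tail $\{n_k : k \geq K\}$ is contained in $M_K$. Hence for each $x \in X$,
$$\limsup_{k \to \infty} f_{n_k}(x) - \liminf_{k \to \infty} f_{n_k}(x) \leq \limsup_{n \in M_K} f_n(x) - \liminf_{n \in M_K} f_n(x) < \frac{1}{K}.$$
Letting $K \to \infty$ forces $\limsup_{k} f_{n_k}(x) = \liminf_{k} f_{n_k}(x)$; the pointwise boundedness assumption $\sup_n |f_n(x)| < \infty$ ensures both quantities are finite, so $\{f_{n_k}(x)\}_{k \in \mathbb{N}}$ converges for every $x \in X$, giving the desired pointwise convergent subsequence.

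There is no real obstacle here, the argument being a clean diagonal extraction; the only point requiring attention is ensuring the diagonal indices $n_k$ can be chosen strictly increasing inside the nested sets $M_k$, which is guaranteed by the infinitude of each $M_k$. Pointwise boundedness is used at the very end to rule out $\pm\infty$ values of $\limsup$ and $\liminf$, turning equality of these two quantities into genuine convergence.
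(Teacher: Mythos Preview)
Your argument is correct and follows essentially the same diagonal extraction as the paper: build a nested sequence of infinite index sets with oscillation bounded by $1/k$, then diagonalize. If anything, your write-up is slightly more careful than the paper's, since you correctly note that only the \emph{tail} $\{n_k : k \geq K\}$ lies in $M_K$ (the paper asserts $L_\infty \subseteq L_k$ outright, which is only true up to finitely many terms), and you explicitly invoke the pointwise boundedness to guarantee the common value of $\limsup$ and $\liminf$ is finite.
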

\begin{proof}
By induction we will construct a decreasing sequence $\left\{L_k\right\}_{k\in\mathbb{N}}$ of infinite subsets of
$\mathbb{N}$ such that for any $k\in\mathbb{N}$ we have $$\displaystyle\limsup_{n\in L_k}f_n(x)-\displaystyle\liminf_{n\in L_k}f_n(x)<\frac{1}{k},\quad\forall x\in X.$$ Consider a strictly increasing sequence
 $\left\{n_k\right\}_{k\in\mathbb{N}}$ such that $n_k\in L_k\quad\forall k\in\mathbb{N}$ and the set $L_\infty=\left\{n_k:k\in\mathbb{N}\right\}$. Clearly $L_\infty$ is infinite and for any $k\in\mathbb{N}$ we have $L_\infty\subseteq L_k$. Then for all $x\in X$ we have that
\begin{equation*}
\displaystyle\limsup_{n\in L_\infty}f_n(x)-\displaystyle\liminf_{n\in L_\infty}f_n(x)\leq \displaystyle\limsup_{n\in L_k}f_n(x)-\displaystyle\liminf_{n\in L_k}f_n(x)<\frac{1}{k},\quad\forall k\in\mathbb{N},
\end{equation*} 
so letting $k\to\infty$, we obtain
$$\limsup_{n\in L_\infty}f_n(x)=\liminf_{n\in L_\infty}f_n(x),$$
thus the sequence $(f_n)_{n\in L_\infty}$ is pointwise convergent.
\end{proof}

We are now able to prove the non-embedding of $\ell^1$ in $\mathcal{JT}$.

\begin{theorem}
\ $\ell^1$ does not embed in $\mathcal{JT}$.
\end{theorem}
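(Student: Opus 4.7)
The plan is a proof by contradiction via Rosenthal's $\ell^1$-theorem. If $\ell^1$ embeds in $\mathcal{JT}$, then the images of the standard $\ell^1$-basis form a bounded sequence $\{x_n\}_{n\in\mathbb{N}}\subseteq\mathcal{JT}$ equivalent to that basis, and Rosenthal's theorem implies $\{x_n\}$ admits no weakly Cauchy subsequence. My goal is to contradict this by producing one, and by Proposition \ref{prop 3.2.4} it suffices to extract $L\in[\mathbb{N}]$ along which $I^*(x_n)$ converges for every interval $I$.

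Since $2^{<\mathbb{N}}$ is countable, a standard diagonal argument delivers $L_0\in[\mathbb{N}]$ along which $e_s^*(x_n)$ converges for every node $s$; hence $F^*(x_n)=\sum_{s\in F}e_s^*(x_n)$ converges along $L_0$ for every segment $F$. Since a general branch $B$ is of the form $\sigma^*-S$ for some $\sigma\in 2^{\mathbb{N}}$ and a finite segment $S$, the task reduces to finding $L\in[L_0]$ along which $\sigma^*(x_n)$ converges for every $\sigma\in 2^{\mathbb{N}}$. To this end I would invoke Lemma \ref{lem fuc} with $X=2^{\mathbb{N}}$ and $f_n(\sigma)=\sigma^*(x_n)$ for $n\in L_0$; pointwise boundedness $\sup_n|f_n(\sigma)|\leq C:=\sup_n\|x_n\|$ follows from $\|\sigma^*\|=1$.

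The critical step, and main obstacle, is to verify the oscillation hypothesis: for each $\epsilon>0$ and each $M\in[L_0]$, produce $L\in[M]$ with
$$\limsup_{n\in L}\sigma^*(x_n)-\liminf_{n\in L}\sigma^*(x_n)<\epsilon\quad\text{for every }\sigma\in 2^{\mathbb{N}}.$$
I plan to do this by iterated refinement coupled with the disjoint-branch $\ell^2$-inequality. Starting from $M$, at each stage I pick a $\sigma$ whose oscillation along the current subsequence is at least $\epsilon$, and refine to a sub-subsequence along which $\sigma^*(x_n)$ converges. If the process did not terminate, one would obtain distinct $\sigma^1,\sigma^2,\ldots\in 2^{\mathbb{N}}$; by compactness of $2^{\mathbb{N}}$, pass to a convergent sub-sub-sequence $\sigma^{k_j}\to\sigma^{\infty}$ with strictly increasing separation levels $N_j$ from $\sigma^\infty$. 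The tail branches $B_j:=\{\sigma^{k_j}|_\ell:\ell\geq N_j\}$ are then pairwise disjoint, so the fundamental inequality
$$\sum_{j}|B_j^*(x)|^2\leq\|x\|^2,\qquad x\in\mathcal{JT},$$
applies. Since the head segments $\sum_{\ell<N_j}e_{\sigma^{k_j}|_\ell}^*$ converge along $L_0\supseteq M$, each $B_j^*$ inherits oscillation at least $\epsilon$ along the appropriate refined subsequence; combining this with the pointwise decay $B_j^*(x_n)\to 0$ as $j\to\infty$ for each fixed $n$ (a consequence of the $\ell^2$-bound), together with careful bookkeeping of the refinements, is intended to force the contradiction that terminates the procedure.

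Once the oscillation hypothesis is established, Lemma \ref{lem fuc} supplies $L\in[L_0]$ along which $\sigma^*(x_n)$ converges for every $\sigma\in 2^{\mathbb{N}}$, hence $B^*(x_n)$ converges along $L$ for every branch $B$. Combined with the segment convergence inherited from $L_0$, $I^*(x_n)$ converges along $L$ for every interval $I$; Proposition \ref{prop 3.2.4} then yields that $\{x_n\}_{n\in L}$ is weakly Cauchy, contradicting the initial assumption. The hard part remains the oscillation verification, demanding a delicate interplay between the disjoint-branch $\ell^2$-structure of $\mathcal{JT}$ and sequential compactness in the Cantor tree.
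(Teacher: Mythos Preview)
Your global architecture coincides with the paper's: argue by contradiction via Rosenthal, and manufacture a weakly Cauchy subsequence by verifying the hypothesis of Proposition~\ref{prop 3.2.4} through Lemma~\ref{lem fuc}. The substantive deviation is that you work with the raw images $\{x_n\}$ of the $\ell^1$-basis, whereas the paper first invokes Proposition~\ref{prop 1.4.5} to replace them by a \emph{block} $\{u_n\}$ of the canonical basis. This is not cosmetic; the block property is exactly what closes the oscillation step, and your compactness substitute, as written, does not.

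Here is the gap. In the paper one only needs finitely many distinct $\sigma_1,\dots,\sigma_k$ (with $k\epsilon^2/4>M^2$) and a subsequence $L_k$ along which each $\sigma_i^{*}(u_n)^2$ tends to something exceeding $\epsilon^2/4$. Because $\{u_n\}$ is a block, $\supp(u_n)$ eventually lies beyond the separation level of $\sigma_1,\dots,\sigma_k$; restricted to that support the branches are pairwise disjoint, so $\|u_n\|^2\ge\sum_{i=1}^k\sigma_i^{*}(u_n)^2>k\epsilon^2/4>M^2$, the desired contradiction. Without the block property this fails: writing $\sigma_i^*=S_i^*+B_i^*$ with $S_i$ the head segment and $B_i$ the (disjoint) tail, the $\ell^2$-bound controls only $\sum_i|B_i^*(x_n)|^2$, and $B_i^*(x_n)=\sigma_i^*(x_n)-S_i^*(x_n)$ need not stay large, since the head segments (whose lengths grow with $k$) can absorb the mass. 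Your compactness manoeuvre $\sigma^{k_j}\to\sigma^\infty$ inherits the same problem: the heads $S_j$ are initial segments of $\sigma^\infty$ of unbounded length, and nothing you wrote forces $\lim_n B_j^*(x_n)$ to be bounded below. The pointwise decay $B_j^*(x_n)\to0$ as $j\to\infty$ for fixed $n$ is true but irrelevant; what is needed is a \emph{lower} bound on $|B_j^*(x_n)|$ for a single large $n$ and many $j$ simultaneously. The clean fix is precisely the paper's: reduce to a block via Proposition~\ref{prop 1.4.5} first, after which the finite-stage contradiction goes through in a few lines with no need for compactness of $2^{\mathbb{N}}$.
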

\begin{proof}
Assume $\ell^1$ embeds in $\mathcal{JT}$. Then
Proposition \ref{prop 1.4.5} implies there is block of the basis of $\mathcal{JT}$ equivalent to the standard basis of $\ell^1$.
We will show that any block has w-Cauchy subsequence, which contradicts the $\ell^1$-Theorem. Let $\left\{u_n\right\}_{n\in\mathbb{N}}$ a block and let us write $M=\displaystyle\sup_{n\in\mathbb{N}}\left\{||u_n||\right\}$. 
We will show there is subsequence $\left\{u_{n_k}\right\}_{k\in\mathbb{N}}$ such that $\left\{I^*(u_{n_k}\right\}_{k\in\mathbb{N}}$ converges for any interval $I$ and the contradiction will come by Proposition \ref{prop 3.2.4}.
For segments, using a diagonal argument, we may find $K\in[\mathbb{N}]$ such that $\left\{S^*(u_n)\right\}_{n\in K}$ converges for any segment $S$. Therefore, using  Lemma \ref{lem fuc}, it suffices to show that for any $\epsilon>0$ and $M\in[K]$, there is $L\in[M]$ such that $$\displaystyle\limsup_{n\in L}\sigma^*(u_n)-\displaystyle\liminf_{n\in L}\sigma^*(u_n)\leq\epsilon,\quad\forall \sigma\in 2^{\mathbb{N}}.$$ 
 Arguing by contradiction, assume there is $\epsilon>0$ and $M\in [K]$, such that for any $L\in[M]$, there is $\sigma_L\in 2^\mathbb{N}$ with
\begin{equation}\label{12}\limsup_{n\in L}\sigma_L^*(u_n)-\displaystyle\liminf_{n\in L}\sigma_L^*(u_n)>\epsilon\Rightarrow\displaystyle\limsup_{n\in L}{\sigma_L^*}^2(u_n)+\displaystyle\liminf_{n\in L}{\sigma_L^*}^2(u_n)>\frac{\epsilon^2}{2}.\end{equation} 
We pick $k\in\mathbb{N}$ such that $k\displaystyle\frac{\epsilon^2}{4}>M^2$. Consider $L_0\in[M]$
and $\sigma_1\in 2^\mathbb{N}$ such that \eqref{12} holds. Then at least one of
$\displaystyle\limsup_{n\in L_0}{\sigma_1^*}^2(u_n)$ and $\displaystyle\liminf_{n\in L_0}{\sigma_1^*}^2(u_n)$ is greater than $\displaystyle\frac{\epsilon^2}{4}$. Hence, there is $L_1\in[L_0]$ such that
$\displaystyle\lim_{n\in L_1}{\sigma_1^*}^2(u_n)>\frac{\epsilon^2}{4}$ and  $\sigma_2\in 2^\mathbb{N}$ which satisfies \eqref{12}. It is clear that $\sigma_1\neq \sigma_2$.
Continuing inductively we may find $L_k\subseteq L_{k-1}\subseteq,...,\subseteq L_1\in\mathbb{N}$ and $\sigma_1,...,\sigma_k\in 2^\mathbb{N}$ pairwise distinct such that
$\displaystyle\lim_{n\in L_i}{\sigma_i^*}^2(u_n)>\frac{\epsilon^2}{4}$, for any $i=1,...,k$, thus $ \displaystyle\lim_{n\in L_k}{\sigma_i^*}^2(u_n)>\frac{\epsilon^2}{4},\quad\forall i=1,...,k$. Therefore there is $N\in L_k$ such that for any $i=1,...,k$, we have
$${\sigma_i^*}^2(u_n)>\displaystyle\frac{\epsilon^2}{4},\quad\forall n\in L_k:n\geq N.$$
But $\sigma_1,...,\sigma_k$ finally separate, since they are pairwise distinct and the sequence $\left\{u_n\right\}_{n\in\mathbb{N}}$ is block, so we may find $n_0\in L_k$ with $n_0\geq N$ such that $$||u_{n_0}||^2\geq\displaystyle\sum_{i=1}^k{\sigma_i^*}^2(u_{n_0})>k\frac{\epsilon^2}{4}>M^2.$$ But this contradicts the boundness assumption on $\left\{u_n\right\}_{n\in\mathbb{N}}$. The proof is complete.
\end{proof}
\begin{corollary}
Every bounded sequence in $\mathcal{JT}$ has $w-$Cauchy subsequence.
\end{corollary}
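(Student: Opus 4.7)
The plan is to assume for contradiction that $\ell^1$ embeds in $\mathcal{JT}$ and derive a contradiction with Rosenthal's $\ell^1$-Theorem. Since the basis $\{e_n\}_{n\in\mathbb{N}}$ of $\mathcal{JT}$ is monotone and unitary, Proposition \ref{prop 1.4.5} applied to the standard $\ell^1$-basis yields a block $\{u_n\}_{n\in\mathbb{N}}$ of $\{e_n\}_{n\in\mathbb{N}}$ equivalent to the $\ell^1$-basis. Such a block is bounded, and by the $\ell^1$-Theorem it can have no weak-Cauchy subsequence. The whole task therefore reduces to proving that every bounded block in $\mathcal{JT}$ \emph{does} admit a weak-Cauchy subsequence.

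To extract such a subsequence I would invoke Proposition \ref{prop 3.2.4}: it suffices to find $\{u_{n_k}\}$ along which $I^*(u_{n_k})$ converges for every interval $I$, segment or branch. Segments are indexed by pairs of nodes, hence countable, so a standard diagonal procedure delivers $K\in[\mathbb{N}]$ for which $S^*(u_n)$ converges for every segment $S$. The remaining, genuinely hard task is to arrange convergence simultaneously along the uncountably many branches.

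For branches I plan to apply Lemma \ref{lem fuc} to the uniformly bounded family $\sigma\mapsto\sigma^*(u_n)$ of functions on $2^{\mathbb{N}}$. Thus it is enough to verify its hypothesis: for every $\varepsilon>0$ and every $M\in[K]$ one can find $L\in[M]$ with
\[
\limsup_{n\in L}\sigma^*(u_n)-\liminf_{n\in L}\sigma^*(u_n)\leq\varepsilon,\qquad\forall\sigma\in 2^{\mathbb{N}}.
\]
I expect this step to be the principal obstacle. I would argue by contradiction: if no such $L$ exists, then by iterated refinement I would build a nested chain $M\supseteq L_1\supseteq L_2\supseteq\cdots\supseteq L_k$ and pairwise distinct $\sigma_1,\dots,\sigma_k\in 2^{\mathbb{N}}$ such that $\lim_{n\in L_k}{\sigma_i^*}^2(u_n)>\varepsilon^2/4$ for every $i=1,\dots,k$, where $k$ is chosen in advance so large that $k\varepsilon^2/4$ exceeds $\sup_n\|u_n\|^2$.

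The payoff comes from the block structure of $\{u_n\}$, which is what distinguishes this argument from a general bounded sequence. Because pairwise distinct $\sigma_1,\dots,\sigma_k$ separate at some finite level while the supports $\supp(u_n)$ are pushed arbitrarily high in the tree, for all sufficiently large $n$ the sets $B_i\cap\supp(u_n)$ (where $B_i$ is the branch of $\sigma_i$ above the separation level) are pairwise disjoint segments. The norm description in Remark \ref{intervals} then forces
\[
\|u_n\|^2\geq\sum_{i=1}^k|\sigma_i^*(u_n)|^2>k\cdot\frac{\varepsilon^2}{4}>\sup_{m\in\mathbb{N}}\|u_m\|^2,
\]
contradicting boundedness of the block and completing the proof.
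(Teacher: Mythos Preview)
Your argument is mathematically sound, and the substantive part (showing every bounded block of $\{e_n\}$ admits a weak-Cauchy subsequence via Proposition~\ref{prop 3.2.4}, the diagonal argument for segments, Lemma~\ref{lem fuc} for branches, and the separation/norm contradiction) follows the paper's reasoning exactly. However, you have proved the wrong statement: everything you wrote is the proof of the \emph{preceding Theorem} (that $\ell^1$ does not embed in $\mathcal{JT}$), not of the Corollary. The paper's proof of the Corollary is a single line: since $\ell^1$ does not embed in $\mathcal{JT}$ (already established), Rosenthal's $\ell^1$-Theorem implies every bounded sequence has a weak-Cauchy subsequence.

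Your write-up opens with ``assume for contradiction that $\ell^1$ embeds'' and concludes with a contradiction to boundedness of the block---that is, you stop at ``$\ell^1$ does not embed.'' To actually obtain the Corollary you still need one more invocation of Rosenthal's theorem in the forward direction: given an arbitrary bounded sequence in $\mathcal{JT}$, either it has a weak-Cauchy subsequence or it has a subsequence equivalent to the $\ell^1$-basis; the latter is now excluded. You never state this, so as written your proposal does not close the loop on the Corollary, even though all the hard work is done and done correctly.
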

\begin{proof}
It comes immediately by the fact that $\ell^1$ does not embed in $\mathcal{JT}$ and the $\ell^1$-Theorem.
\end{proof}

\end{document}